\newcommand{\nc}{\newcommand}
 \nc{\cl}{\centerline}
 \nc{\SL}{{\rm SL}}
 \nc{\hatQ}{{\hat Q}}
 \nc{\sgn}{{\rm sgn}}
 \nc{\ad}{{\rm ad}}
  \nc{\idx}{{\rm index}}
 \nc{\Mat}{{\rm Mat}}
  \nc{\Ann}{{\rm Ann}}
  \nc{\A}{{\mathcal A}}
    \nc{\B}{{\mathcal B}}
      \nc{\C}{{\mathcal C}}
           \nc{\DD}{{\mathcal D}}
 \nc{\Loewy}{{\rm Loewy}}
 \nc{\Supp}{{\rm Supp}}
 \nc{\env}{{\rm env}}
 \nc{\wt}{{\rm wt}}
 \nc{\poll}{{\rm \l}}
 \nc{\efp}{{\Bbb F}_p}
 \newcommand{\Par}{{\rm Par}}
\nc{\baru}{{\overline u}}
\nc{\baralpha}{{\overline \alpha}}
\nc{\bargamma}{{\overline \gamma}}
\nc{\barA}{{\bar A}}
\nc{\barq}{{\overline q}}
 \nc{\Orb}{{\rm Orb}}
 \nc{\hatsigma}{{\hat \sigma}}
 \nc{\hatpi}{{\hat \pi}}
  \nc{\hatzeta}{{\hat \zeta}}
 \nc{\Ocal}{{\mathcal O}}
 \nc{\M}{\mathfrak{m}}
 \nc{\seee}{\mathbb C}
 \nc{\varleq}{\preccurlyeq}
 \nc{\hatlambda}{{\hat\lambda}}
 \nc{\hatphi}{{\hat \phi}}
 \nc{\daggerlambda}{{\lambda^\dagger}}
    \nc{\barr}{{\bar r}}
  \nc{\bart}{{\bar t}}
    \nc{\barsigma}{{\bar \sigma}}
   \newcommand{\op}{{\rm op}}
\nc\diag{{\rm diag}}
\renewcommand{\vert}{{\,|\,}}
\nc{\hatL}{{\hat L}}
\nc{\barE}{{\bar   E}}
\nc{\D}{{\mathcal D}}
\nc{\E}{{\mathcal E}}
\nc{\F}{{\mathcal F}}
\nc{\FF}{{\mathcal F}}
\nc{\I}{{\mathcal I}}
\nc{\even}{{\rm e}}
\nc{\ep}{\epsilon}
\nc{\odd}{{\rm o}}
\nc{\Coker}{{\rm Coker}}
\nc{\olE}{{\overline E}}
\nc{\indBG}{{\rm ind}_B^G\,}
\nc{\indHG}{{\rm ind}_H^G\,}
\renewcommand{\O}{{\mathcal O}}
\nc{\que}{{\mathbb Q}}
\nc{\barlambda}{{\bar\lambda}}
\nc{\barmu}{{\bar\mu}}
\nc{\barnu}{{\bar\nu}}
\nc{\bartau}{{\bar\tau}}
\nc{\barm}{{\overline  m}}
\nc{\divind}{{\rm div.ind}}
\nc{\tl}{{\tilde{\lambda}}}
\nc{\dar}{\downarrow}
\nc{\eno}{{\mathbb N}_0}
\nc{\Sym}{{\rm Sym}}
\nc{\Symm}{{\rm Sym}}
\newcommand{\q}{\quad}
\newcommand{\de}{\delta}
\renewcommand{\mod}{{\rm mod}}
\newcommand{\Sp}{{\rm Sp}}
\newcommand{\bs}{\bigskip}
\renewcommand{\vert}{\,|\,}
\renewcommand{\sgn}{{\rm sgn}}
\renewcommand{\vert}{\,|\,}
\newcommand{\zed}{{\mathbb Z}}
\newcommand{\End}{{\rm End}}
\newcommand{\Hom}{{\rm Hom}}
\renewcommand{\mod}{{\rm mod}}
\nc{\barB}{{\overline B}}
\nc{\barb}{{\overline b}}
\renewcommand{\mod}{{\rm{mod}}}
\nc{\geom}{{\rm geom}}
\nc{\rep}{{\rm rep}}
\newtheorem{definition}{Definition}[section]
\newtheorem{proposition}[definition]{Proposition}
\newtheorem{theorem}[definition]{Theorem}
\newtheorem{lemma}[definition]{Lemma}
\newtheorem{corollary}[definition]{Corollary}
\newtheorem{example}[definition]{Example}
\newtheorem{remark}[definition]{Remark}
\begin{document}


\centerline{\bf  Cellularity of endomorphism algebras of}

\centerline{ \bf  Young permutation modules.}

\bigskip

\centerline{Stephen Donkin}

\bigskip

{\it Department of Mathematics, University of York, York YO10 5DD}

\medskip

\centerline{\tt stephen.donkin@york.ac.uk}

\bs

\centerline{3  June    2020}

\bs

\section*{Abstract}

\q Let $E$ be an $n$-dimensional vector space. Then the symmetric group $\Sym(n)$ acts on $E$ by permuting the elements of a basis and hence on the $r$-fold tensor product $E^{\otimes r}$.  Bowman, Doty and Martin ask,  in \cite{BDM},  whether the endomorphism algebra $\End_{\Sym(n)}(E^{\otimes r})$ is cellular.  The module $E^{\otimes r}$ is the permutation module for a certain Young $\Sym(n)$-set.  We shall show that the endomorphism algebra of  the permutation module on an arbitrary  Young $\Sym(n)$-set  is a cellular algebra.   We   determine, in terms of the point stabilisers which appear, when the endomorphism algebra  is quasi-hereditary. 

\section{Introduction}

\q We fix a positive integer $n$. The symmetric group of degree $n$ is denoted $\Sym(n)$.   For a partition $\lambda=(\lambda_1,\lambda_2,\ldots)$ of $n$ we have the Young subgroup, i.e. the group  $\Sym(\lambda)=\Sym(\lambda_1)\times \Sym(\lambda_2)\times \cdots$, regarded as a subgroup of $\Sym(n)$ in the usual way. By a Young $\Sym(n)$-set we mean a finite $\Sym(n)$-set such that each point stabiliser is conjugate to a Young subgroup.  Let $R$ be a commutative ring. Our interest is in the endomorphism algebra $\End_{\Sym(n)} (R\,\Omega)$ of the permutation module $R\,\Omega$ on a Young $\Sym(n)$-set $\Omega$.   We shall show that $\End_{\Sym(n)}(\zed\, \Omega)$ has a cellular structure, Theorem 6.4, hence by base change so has $\End_{\Sym(n)}(R\, \Omega)$, for an arbitrary commutative ring $R$.

\q Taking the base ring now to be a field $k$ of positive characteristic, we give a criterion for $\End_{\Sym(n)}(k\Omega)$ to be a quasi-hereditary algebra, in terms of the set of partitions $\lambda$ of $n$ for which $\Sym(\lambda)$ occurs as a point stabiliser,  and the characteristic $p$ of $k$, see Theorem 6.4.  This is applied to the case $\Omega=I(n,r)$, the set of maps from $\{1,\ldots,r\}$ to $ \{1,\ldots,n\}$, for a positive integer $r$, with $\Sym(n)$ acting by composition of maps.  The permutation module $k I(n,r)$ may be regarded as the $r$th tensor power $E^{\otimes r}$ of an $n$-dimensional vector space $E$, and we thus determine when $\End_{\Sym(n)}(E^{\otimes r})$ is quasi-hereditary, see Proposition 7.3. 

\q Our procedure is to analyse the endomorphism algebra of a Young permutation module  in the spirit of the Schur algebra $S(n,r)$ (which is a  special case).  Of particular importance to us will be the fact that the Schur algebras is  quasi-hereditary.  There are several approaches to this (see 
e.g. \cite[Section A5]{q-Donk} and \cite{Parshall})  but for us the most convenient is that of Green, \cite{CASA}.  This has the advantage of being a purely combinatorial account carried out over an arbitrary commutative base ring.  So we regard what follows as a modest generalisation of  some aspects of   \cite{CASA}: we follow  Green's approach and notation to a large extent.

\section{Preliminaries}

\bs

\q We write $\mod(S)$ for the category of finitely generated modules over a ring $S$.

\q Let $G$ be a finite group and $K$ a field of characteristic $0$. Let $X$ be a finitely generated $KG$-module. Suppose that all composition factors of $X$ are absolutely irreducible.  Let $U_1,\ldots,U_d$ be a complete set of pairwise non-isomorphic composition factors of $X$.  We write $X$ as a direct sum of simple modules $X=X_1\oplus \cdots \oplus  X_h$.   For $1\leq  i \leq d$ let $m_i$ be the number of  elements $r\in \{1,\ldots,h\}$ such that $X_r$ is isomorphic to $U_i$.  Let $S=\End_G(X)$. Then $S$ is isomorphic to the product of the matrix algebras \\
$M_{m_1}(K), \ldots,  M_{m_d}(K)$.  Let the corresponding irreducible modules for $S$ be $L_1,\ldots,L_d$.   We have an exact functor from $f:\mod(KG)\to \mod(S)$,  given on objects by $f(Z)=\Hom_{\Sym(n)}(X,Z)$.  Moreover we have $S=f(X)=\bigoplus_{r=1}^h \Hom_G(X,X_r)$. If follows that the modules    $L_i=  fU_i=\Hom_G(X,U_i)$, $1\leq i\leq d$, form a complete set of pairwise non-isomorphic irreducible $S$-modules. 

\q The situation in positive characteristic is similar, cf. \cite[(3.4) Proposition]{GreenMER}.   Suppose now that $F$ is any  field  which is a  splitting field for $G$.  Let  $Y$ be  a finitely generated $KG$-module such that every indecomposable component is absolutely indecomposable. Let $V_1,\ldots,V_e$ be a complete set of pairwise non-isomorphic indecomposable summands of $Y$. We write $Y$ as a direct sum of indecomposable modules $Y=Y_1\oplus\cdots \oplus Y_k$.  For $1\leq  j \leq e$ let $n_j$ be the number of  elements $r\in \{1,\ldots,k\}$ such that $X_r$ is isomorphic to $V_j$.  Let $T=\End_G(Y)$.  Then each $P_j=\Hom_G(Y,V_j)$ is naturally a $T$-module and the modules $P_1,\ldots,P_e$ form a complete set of pairwise non-isomorphic projective $T$-modules. Let $N_j$ be the head of $P_j$, $1\leq j\leq e$. Then the modules $N_1,\ldots,N_e$ form a complete set of pairwise non-isomorphic irreducible $T$-modules. The dimension of $N_j$ over $F$ is $n_j$.

\q We now  fix a positive integer  $n$.  We write $\Par(n)$ for the set of partitions of $n$.  By the support $\zeta(\Omega)$ of a Young $\Sym(n)$-set $\Omega$ we mean the set of $\lambda\in \Par(n)$ such that the Young subgroup $\Sym(\lambda)$ is a point stabiliser.   Let $R$ be a commutative ring.  For a Young $\Sym(n)$-set $\Omega$ we write $S_{\Omega,R}$ for the endomorphism algebra $\End_{\Sym(n)}(R\Omega)$ of the permutation module $R\Omega$.    For $\lambda\in \Par(n)$ we write $M(\lambda)_R$ for the permutation module   $R\, \Sym(n)/\Sym(\lambda)$.   

\q We have the usual dominance partial order  $\trianglelefteq$  on $\Par(n)$.  Thus, for $\lambda=(\lambda_1,\lambda_2,\ldots),  \mu=(\mu_1\mu_2,\ldots)\in \Par(n)$,  we write   $\lambda \trianglelefteq \mu$ if $\lambda_1+ \cdots+\lambda_a\leq \mu_1+\cdots+\mu_a$ for all $1\leq a\leq n$.

\q Recall that the Specht modules $\Sp(\lambda)_\que$, $\lambda\in \Par(n)$, form a complete set of pairwise irreducible $\que \Sym(n)$-modules.  For 
$\lambda\in \Par(n)$ we have 
$M(\lambda)_\que=\Sp(\lambda)_\que \oplus C$, where $C$ is a direct sum of modules of the form $\Sp(\mu)$  with $ \lambda \vartriangleleft \mu$,   and moreover every Specht module $\Sp(\mu)_\que$ with $\lambda \triangleleft \mu$ occurs in $C$ (see for example  \cite[14.1]{James}).

\q For a Young $\Sym(n)$-set $\Omega$ we define  
$$\zeta^\trianglerighteq(\Omega)=\{\mu\in \Par(n) \vert \mu \trianglerighteq \lambda  \hbox{ for some } \lambda\in \zeta(\Omega)\}.$$

\q Thus the composition factors of $\que\Omega$ are $\{\Sp(\mu)_\que \vert \mu \in \zeta^\trianglerighteq(\Omega)\}$ and, setting $\nabla_\Omega(\lambda)_\que=\Hom_{\Sym(n)}(\que \Omega,\Sp(\mu)_\que)$, we have the following. 

\begin{lemma} The modules $\nabla_\Omega(\lambda)_\que$,   $\lambda\in  \zeta^\trianglerighteq(\Omega)$,  form a complete set of pairwise non-isomorphic irreducible $S_{\Omega,\que}$-modules. 
\end{lemma}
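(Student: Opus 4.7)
The plan is to apply the general fact recalled in the first paragraph of this section, with $K=\que$, $G=\Sym(n)$ and $X=\que\Omega$, and then to read off the composition factors of $\que\Omega$ using the Specht module decomposition of Young permutation modules recorded just before the lemma. Since $\que$ is a splitting field for $\Sym(n)$, each Specht module $\Sp(\mu)_\que$ is absolutely irreducible, so the hypothesis of the cited general fact is met. That fact then asserts that if $U_1,\ldots,U_d$ is the list of distinct composition factors of $X$, the modules $\Hom_{G}(X,U_i)$ form a complete set of pairwise non-isomorphic irreducible $\End_G(X)$-modules.

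It therefore remains to enumerate the distinct composition factors of $\que\Omega$. First I would decompose $\Omega$ into its $\Sym(n)$-orbits; since each orbit with point stabiliser conjugate to $\Sym(\lambda)$ contributes a summand isomorphic to $M(\lambda)_\que$, this gives
$$\que\Omega \;\cong\; \bigoplus_{\lambda\in \zeta(\Omega)} M(\lambda)_\que^{\,\oplus m_\lambda},$$
where $m_\lambda$ counts the orbits whose stabiliser is conjugate to $\Sym(\lambda)$. By the result from \cite[14.1]{James} recalled immediately before the lemma, the composition factors of $M(\lambda)_\que$ are exactly the Specht modules $\Sp(\mu)_\que$ with $\mu \trianglerighteq \lambda$, and every such $\mu$ does occur. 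Taking the union over $\lambda\in\zeta(\Omega)$, the set of isomorphism classes of composition factors of $\que\Omega$ is precisely $\{\Sp(\mu)_\que : \mu\in\zeta^\trianglerighteq(\Omega)\}$, as already asserted in the paragraph preceding the lemma.

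Substituting this identification into the general statement yields that
$$\nabla_\Omega(\mu)_\que \;=\; \Hom_{\Sym(n)}(\que\Omega,\Sp(\mu)_\que), \qquad \mu\in \zeta^\trianglerighteq(\Omega),$$
form a complete set of pairwise non-isomorphic irreducible $S_{\Omega,\que}$-modules, which is the claim. There is no genuine obstacle here: the lemma is a direct specialisation of the semisimple bookkeeping at the start of the section, combined with the classical decomposition of Young permutation modules in characteristic zero. The only point requiring mild care is the bookkeeping step that the union of the sets $\{\mu : \mu \trianglerighteq \lambda\}$ over $\lambda\in\zeta(\Omega)$ equals $\zeta^\trianglerighteq(\Omega)$, which is immediate from the definition.
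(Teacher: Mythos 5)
Your proposal is correct and follows exactly the route the paper intends: the lemma is stated there as an immediate consequence of the general fact on endomorphism algebras in characteristic zero from Section 2, combined with the orbit decomposition $\que\Omega\cong\bigoplus_{\lambda\in\zeta(\Omega)}M(\lambda)_\que^{\oplus m_\lambda}$ and the characteristic-zero decomposition of $M(\lambda)_\que$ from \cite[14.1]{James}, which is precisely your argument.
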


\begin{remark} Since $S_{\Omega,\que}$ is a direct sum of matrix algebras over $\que$ it is semisimple, all irreducible modules  are absolutely irreducible and  \\
 $\dim_\que  S_{\Omega,\que}=\sum_{\lambda\in   \zeta^\trianglerighteq(\Omega)} (\dim_\que \nabla_\Omega(\lambda)_\que)^2.$

\end{remark}

\q We now let $k$ be a field of characteristic $p>0$.  For $\lambda\in \Par(n)$ we have  the Young module $Y(\lambda)$ for $k\Sym(n)$, labelled by $\lambda$, as described in \cite[Section 4.4]{q-Donk} for example.   Then we have $M(\lambda)_k=Y(\lambda)\oplus C$, where $C$ is a direct sum of Young modules $Y(\mu)$, with $\lambda \triangleleft \mu$, see for example \cite[Section 4.4 (1) (v)]{q-Donk}.     A partition $\lambda=(\lambda_1,\lambda_2,\ldots) $ will be called $p$-restricted (also called column $p$-regular) if $\lambda_i-\lambda_{i+1} < p$ for all $i\geq 1$.  A partition $\lambda$ has a unique expression
$$\lambda=\sum_{i\geq 0}  p^i \lambda(i)$$ where each $\lambda(i)$ is a $p$-restricted partition.   This is called the base $p$ (or $p$-adic) expansion of $\lambda$.   

\q We write $\Lambda(n)$ for the set of all $n$-tuples of non-negative integers.  An expression $\lambda=\sum_{i\geq 0} p^i\gamma(i)$, with all $\gamma(i)\in \Lambda(n)$ (but not necessarily restricted) will be called a weak $p$ expansion. 

\q For an $n$-tuple of non-negative integers $\gamma$ we write $\bargamma$ for the partition obtained by arranging the entries in descending order.

\begin{definition} For $\lambda,\mu\in \Par(n)$ we shall say that $\mu$ $p$-dominates $\lambda$, and write $\mu   \trianglerighteq_p \lambda$ (or  $\lambda  \trianglelefteq_p \mu$)  if there exists  a weak $p$  expansion $\lambda=\sum_{i\geq 0} p^i \gamma(i)$,  such that $\mu(i)  \trianglerighteq  {\overline {\gamma(i)}}   $ for all $i\geq 0$,  where $\mu=\sum_{i\geq 0} p^i\mu(i)$ is the base $p$ expansion of $\mu$. 
\end{definition}
\q Note that $\lambda  \trianglelefteq_p \mu$ implies  $\lambda  \trianglelefteq  \mu$.    

\q By  \cite[Section 3, Remark]{DTilt},    for $\lambda,\mu\in \Par(n)$,  then module $Y(\mu)$ appears as a component of $M(\lambda)_k$ if and only if  $\lambda  \trianglelefteq_p \mu$. For a Young $\Sym(n)$-set $\Omega$ we define
  $$\zeta^{ \trianglerighteq_p}(\Omega)=\{\mu\in \Par(n) \vert \mu \trianglerighteq_p \lambda  \hbox{ for some }  \lambda\in \zeta(\Omega)\}.$$
  
  \q   Writing $P(\mu)=\Hom_{\Sym(n)}(k\Omega,Y(\mu))$ and writing $L(\mu)$ for the head of $P(\lambda)$,   for $\mu\in \zeta^{ \trianglerighteq_p}(\Omega)$ we have the following.

\begin{lemma} The modules $L(\mu)$,   $\mu \in  \zeta^{\trianglerighteq_p} (\Omega)$,  form a complete set of pairwise non-isomorphic irreducible $S_{\Omega,k}$-modules. 
\end{lemma}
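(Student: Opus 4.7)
The plan is to apply directly the general endomorphism-algebra framework recorded in Section 2 (the paragraph treating the positive characteristic case), taking $G = \Sym(n)$, $F = k$, and $Y = k\Omega$, so that $T = \End_G(Y) = S_{\Omega,k}$. Once the hypotheses of that framework are verified for $Y = k\Omega$, the conclusion of the lemma is immediate: the heads $L(\mu) = \hd P(\mu)$ of the indecomposable projective $T$-modules $P(\mu) = \Hom_{\Sym(n)}(k\Omega, Y(\mu))$ will form a complete set of pairwise non-isomorphic irreducible $T$-modules, indexed by the isomorphism classes of indecomposable summands of $Y = k\Omega$.

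The first step is to identify those indecomposable summands. Writing $\Omega$ as a disjoint union of $\Sym(n)$-orbits, we obtain
$$k\Omega \cong \bigoplus_{\lambda\in \zeta(\Omega)} M(\lambda)_k^{\oplus c_\lambda}$$
for certain positive multiplicities $c_\lambda$.  Each $M(\lambda)_k$ decomposes as $Y(\lambda)$ together with a direct sum of $Y(\mu)$ with $\lambda \triangleleft \mu$ (cf. \cite[Section 4.4 (1) (v)]{q-Donk}), and by the result of \cite{DTilt} cited just above Definition 2.3, $Y(\mu)$ occurs in $M(\lambda)_k$ if and only if $\lambda \trianglelefteq_p \mu$. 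Combining these observations, the set of isomorphism classes of indecomposable summands of $k\Omega$ is precisely $\{Y(\mu) \vert \mu \in \zeta^{\trianglerighteq_p}(\Omega)\}$, as required.

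The second step is to check that each Young module $Y(\mu)$ is absolutely indecomposable. This is a standard feature of Young modules (their endomorphism rings are local with prime residue field), so the hypothesis of the framework holds independently of any splitting field assumption on $k$. With both items in place the framework delivers the lemma.

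The only substantive point is step one---the identification of the indecomposable summands of $k\Omega$---which rests on the orbit decomposition and on the $p$-dominance criterion of \cite{DTilt}. Everything else is a direct translation of the general framework recorded earlier in the section.
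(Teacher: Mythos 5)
Your proposal is correct and is essentially the argument the paper intends: the lemma is stated without proof precisely because it follows immediately from the preceding discussion, namely the orbit decomposition $k\Omega\cong\bigoplus_{\lambda\in\zeta(\Omega)}M(\lambda)_k^{\oplus c_\lambda}$, the criterion of \cite{DTilt} identifying the indecomposable summands as the Young modules $Y(\mu)$ with $\mu\in\zeta^{\trianglerighteq_p}(\Omega)$, and the general fact on modular endomorphism rings recalled from \cite{GreenMER}. Your extra remark that Young modules are absolutely indecomposable correctly disposes of the splitting-field hypothesis in that framework, so nothing is missing.
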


\section{Basic Constructions}

\q We fix a positive integer $n$ and a Young $\Sym(n)$-set $\Omega$. Here we assume the base ring $R$ is either the ring  integers $\zed$ or the field of rational numbers $\que$.  We write $M_{\Omega,R}$, or just $M_R$  for the permutation module $R\,\Omega$ over $R\Sym(n)$. We also just write $M$ for $M_{\Omega,\zed}$.  We shall sometimes write simply $S_R$ for $S_{\Omega,R}$ and just $S$ for $S_\zed$.   We identify $S$ with a subring or $S_\que$ in the natural way.

\q Let     $\{ \O_\alpha\vert \alpha\in \Lambda_\Omega\}$ be a complete set of  orbits in  $\Omega$.   For $\lambda\in \zeta(\Omega)$ we pick $\alpha(\lambda) \in \Lambda_\Omega$ such that $\Sym(\lambda)$ is a point stabiliser of some element of $\O_\alpha$.

\q We put $M_{\alpha,R}=R\,\O_\alpha$, and sometimes write just $M_\alpha$ for $M_{\alpha,\zed}$,  for $\alpha\in \Lambda_\Omega$. For $\beta\in \Lambda_\Omega$ we define the element    $\xi_\beta$ of $S_R$ to be the projection onto $M_{\beta,R}$  coming from the decomposition $M_R=\bigoplus_{\alpha\in \Lambda_\Omega} M_{\alpha,R}$.  Then each $\xi_\alpha$ is  idempotent and we have the orthogonal decomposition:
$$1_S=\sum_{\alpha\in \Lambda_\Omega} \xi_\alpha.$$

\q For a left  $S_R$-module $V$ and $\beta\in \Lambda_\Omega$ we have the
$\beta$ weight space ${}^\beta V=\xi_\beta V$ and the weight space decomposition 
$$V=\bigoplus_{\alpha\in \Lambda_\Omega} {}^\alpha V.$$
For $\lambda\in \Par(n)$ we define
$${}^\lambda V=\begin{cases} \xi_{\alpha(\lambda)}V, & \hbox{ if } \lambda\in \zeta(\Omega);\cr
0, & \hbox{otherwise.}
\end{cases}
$$
Similar remarks apply to  weight spaces of right $S_R$-modules.

\begin{lemma} Let $\lambda\in \zeta^\trianglerighteq(\Omega)$.  Then

(i) $ \dim_\que  {}^\lambda \nabla_\Omega(\lambda)_\que =1$; and 

(ii) if $\mu\in \Par(n)$ and ${}^\mu \nabla_\Omega(\lambda)_\que\neq 0$ then $\mu \trianglelefteq\lambda$.

\end{lemma}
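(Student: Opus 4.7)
The plan is to re-express the weight space ${}^\mu \nabla_\Omega(\lambda)_\que$ as a space of $\Sym(n)$-homomorphisms out of an orbit summand of $\que\Omega$, and then invoke the Specht-module decomposition of a Young permutation module recalled just before the statement.

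First I would reduce to the case $\mu \in \zeta(\Omega)$, since otherwise ${}^\mu \nabla_\Omega(\lambda)_\que = 0$ by definition and both claims hold trivially. Under this assumption $\xi_{\alpha(\mu)} \in S_{\Omega,\que}$ is the $\Sym(n)$-projection of $\que\Omega$ onto the orbit summand $M_{\alpha(\mu),\que}$, and its action on $\phi \in \Hom_{\Sym(n)}(\que\Omega, \Sp(\lambda)_\que)$ is by precomposition, $\phi \mapsto \phi \circ \xi_{\alpha(\mu)}$. Restricting such a map to $M_{\alpha(\mu),\que}$ then gives a natural $\que$-linear isomorphism
$${}^\mu \nabla_\Omega(\lambda)_\que \;\cong\; \Hom_{\Sym(n)}\bigl(M_{\alpha(\mu),\que},\, \Sp(\lambda)_\que\bigr).$$
By the defining property of $\alpha(\mu)$, the orbit $\Ocal_{\alpha(\mu)}$ has point stabiliser $\Sym(\mu)$, so $M_{\alpha(\mu),\que}$ is isomorphic to $M(\mu)_\que$ as a $\que\Sym(n)$-module.

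Next I would apply the characteristic-zero decomposition $M(\mu)_\que = \Sp(\mu)_\que \oplus C_\mu$, where $C_\mu$ is a direct sum of Specht modules $\Sp(\nu)_\que$ with $\nu \vartriangleright \mu$. Since $\que\Sym(n)$ is semisimple with the Specht modules as a complete set of pairwise non-isomorphic irreducibles, Schur's lemma yields
$$\dim_\que \Hom_{\Sym(n)}\bigl(M(\mu)_\que,\, \Sp(\lambda)_\que\bigr) \;=\; [M(\mu)_\que : \Sp(\lambda)_\que].$$

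For (ii), nonvanishing of this multiplicity forces $\Sp(\lambda)_\que$ to appear as a direct summand of $M(\mu)_\que$, whence either $\lambda = \mu$ or $\lambda \vartriangleright \mu$; in either case $\mu \trianglelefteq \lambda$. For (i), taking $\mu = \lambda$, the summand $\Sp(\lambda)_\que$ of $M(\lambda)_\que$ contributes multiplicity exactly one, while $C_\lambda$ contributes nothing since its Specht constituents are all indexed by partitions strictly above $\lambda$ in the dominance order, so ${}^\lambda \nabla_\Omega(\lambda)_\que$ is one-dimensional. There is no serious obstacle here; the only step requiring care is the precise description of the left $S_{\Omega,\que}$-action on $\nabla_\Omega(\lambda)_\que$ through the idempotents $\xi_{\alpha(\mu)}$, which is what makes the identification with $\Hom_{\Sym(n)}\bigl(M_{\alpha(\mu),\que},-\bigr)$ go through.
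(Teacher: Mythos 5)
Your proof is correct and is essentially the paper's own argument: the paper likewise identifies ${}^\mu\nabla_\Omega(\lambda)_\que=\xi_\mu\Hom_{\Sym(n)}(M_\que,\Sp(\lambda)_\que)$ with $\Hom_{\Sym(n)}(M(\mu)_\que,\Sp(\lambda)_\que)$ and reads both parts off the decomposition of $M(\mu)_\que$ into Specht modules recalled in Section 2, which you have merely spelled out in more detail via Schur's lemma. The only caveat (shared with the paper's formulation, and harmless in the one place the lemma is used, where $\Omega$ is cosaturated) is that part (i) really requires $\lambda\in\zeta(\Omega)$, since for $\lambda\in\zeta^\trianglerighteq(\Omega)$ not in $\zeta(\Omega)$ the weight space ${}^\lambda\nabla_\Omega(\lambda)_\que$ is zero by definition, so your remark that both claims hold ``trivially'' in that case applies only to (ii).
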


\begin{proof} Let $\mu\in \Par(n)$ and suppose ${}^\mu \nabla_\Omega(\lambda)_\que\neq 0$.  Thus \\
$\xi_\mu \Hom_{\Sym(n)}(M_\que,\Sp(\lambda)_\que)\neq 0$ i.e. $\Hom_{\Sym(n)}(M(\mu)_\que,\Sp(\lambda)_\que)\neq 0$ and so $\mu  \trianglerighteq \lambda$, giving (ii).  Moreover 
$$\xi_\lambda \Hom_{\Sym(n)}(M_\que,\Sp(\lambda)_\que) =\Hom_{\Sym(n)}(M(\lambda)_\que,\Sp(\lambda)_\que)= \que$$
giving (i).
\end{proof}

\q For $\lambda\in \Par(n)$ we set
$$\xi_\lambda=\begin{cases} \xi_{\alpha(\lambda)}, &\hbox{ if } \lambda\in \zeta(\Omega):\cr
0, & \hbox{ otherwise.}
\end{cases}
$$

\q For $\lambda\in \Par(n)$ we set $S_R(\lambda)=S_R\xi_\lambda S_R$ and for $\sigma\subseteq \Par(n)$ set
$$S_R(\sigma)=\sum_{\lambda\in \sigma} S_R(\lambda).$$

\q We also write simply $S(\lambda)$ for $S_\zed(\lambda)$ and $S(\sigma)$ for $S_\zed(\sigma)$. 

\q Let $\leq$ be a partial order on $\Par(n)$ which is a refinement of the dominance partial order.  For $\lambda\in \zeta(\Omega)$ we set  
$S_R(\geq\lambda)=S_R(\sigma)$, where \\
$\sigma=\{\mu\in \Par(n) \vert \mu\geq \lambda\}$, and 
$S_R(>\lambda)=S_R(\tau)$, where  \\
$\tau=\{\mu\in \Par(n)  \vert \mu > \lambda\}$.   Thus 
$$S_R(\geq \lambda)=S_R\xi_\lambda S_R+S(>\lambda).$$

\q We set $V_R(\lambda)=S_R(\geq\lambda)/S_R(>\lambda)$. So we have 
$$V_R(\lambda)^\lambda=(S_R\xi_\lambda +S_R(>\lambda))/S_R(>\lambda),$$
$${}^\lambda V_R(\lambda)=(\xi_\lambda S_R +S_R(>\lambda))/S_R(>\lambda)$$
and the multiplication map $S_R\xi_\lambda \times  \xi_\lambda S_R \to S_R$ induces a surjective map 
$$\phi_R(\lambda): V_R(\lambda)^\lambda \otimes_R {}^\lambda V_R(\lambda)\to V_R(\lambda).$$

\q For left $S_R$-modules $P,Q$ and $\lambda\in \Par(n)$ we define $\Hom_{\Sym(n)}^\lambda (P,Q)$ to be the $R$-submodule of $\Hom_{\Sym(n)}(P,Q)$ spanned by all composite maps $f\circ g$, with 
$f\in \Hom_{\Sym(n)}(M(\lambda)_R,Q)$ and  $g\in \Hom_{\Sym(n)}(P,M(\lambda)_R)$.    For a subset $\sigma$ of $\Par(n)$ we set
$$\Hom_{\Sym(n)}^\sigma(P,Q)=\sum_{\lambda\in \sigma} \Hom_{\Sym(n)}^\lambda (P,Q).$$

\q We note some similarity of our approach here via these groups of homomorphisms  with  the approach to Schur algebras due to Erdmann, \cite{ErdmannStrat} via stratification.

\q For $\lambda\in \Par(n)$ we define $\Hom_{\Sym(n)}^{\geq\lambda}(P,Q)=\Hom_{\Sym(n)}^\sigma(P,Q)$, where $\sigma=\{\mu\in \Par(n)\vert \mu\geq \lambda\}$, and $\Hom_{\Sym(n)}^{>\lambda}(P,Q)=\Hom_{\Sym(n)}^\tau(P,Q)$, where $\tau=\{\mu\in \Par(n)\vert \mu>  \lambda\}$. 

\bs

\q Note that if  $\lambda\not\in \zeta(\Omega)$ then $V_R(\lambda)=0$.   Suppose $\lambda\in \zeta(\Omega)$. Then we have
\begin{align*}S_R\xi_\lambda S_R&=\sum_{\alpha,\beta,\gamma,\delta\in \Lambda_\Omega} \Hom_{\Sym(n)}(M_{\alpha,R},M_{\beta,R})\xi_\lambda \Hom_{\Sym(n)}(M_{\gamma,R},M_{\delta,R})\cr
&=\sum_{\alpha,\delta\in \Lambda_\Omega} \Hom_{\Sym(n)}(M_{\alpha,R},M_{\alpha(\lambda)})\xi_\lambda 
 \Hom_{\Sym(n)}(M_{\alpha(\lambda)},M_{\delta,R})\cr
 &= \bigoplus_{\alpha,\beta\in \Lambda_\Omega} \Hom_{\Sym(n)}^\lambda (M_{\alpha,R},M_{\beta,R})
 \end{align*}
 and hence 
 $$S_R(\sigma)=\bigoplus_{\alpha,\beta\in \Lambda_\Omega} \Hom_{\Sym(n)}^\sigma (M_{\alpha,R},M_{\beta,R})\eqno{(1)}$$
 for $\sigma\subseteq \Par(n)$. In particular we have 
  $$S_R(\geq\lambda)=\bigoplus_{\alpha,\beta\in \Lambda_\Omega} \Hom_{\Sym(n)}^{\geq\lambda} (M_{\alpha,R},M_{\beta,R})$$
  and 
  $$S_R(>\lambda)=\bigoplus_{\alpha,\beta\in \Lambda_\Omega} \Hom_{\Sym(n)}^{>\lambda} (M_{\alpha,R},M_{\beta,R})$$
and hence 
$$V_R(\lambda)=\bigoplus_{\alpha,\beta\in \Lambda_\Omega} \Hom_{\Sym(n)}^{\geq\lambda} (M_{\alpha,R},M_{\beta,R})/\Hom_{\Sym(n)}^{>\lambda} (M_{\alpha,R},M_{\beta,R}).\eqno{(2)}$$

\begin{example}  Of crucial importance is the motivating example  of the usual Schur algebra $S(n,r)$.   Let $R$ be a commutative ring and let $E_R$ be a free $R$-module of rank $n$.  Then $\Sym(r)$ acts on the $r$-fold tensor product  $E_R^{\otimes r}=E_R\otimes \cdots \otimes_R E_R$  by place permutation,  and the Schur algebra $S_R(n,r)$ may be realised as $\End_{\Sym(r)}(E_R^{\otimes r})$.  

\q We choose an $R$-basis $e_1,\ldots,e_n$ of $E_R$.  We  write $I(n,r)$ for the set of maps from $\{1,\ldots,r\}$ to $\{1,\ldots,n\}$. We regard $i\in I(n,r)$ as an $r$-tuple of elements $(i_1,\ldots, i_r)$ with entries in $\{1,\ldots, n\}$ (where $i_a=i(a)$, $1\leq a\leq r$).   The group $\Sym(r)$ acts on $I(n,r)$ composition of maps, i.e.  by $w\cdot i=i\circ w^{-1}$, for $w\in \Sym(r)$, $i\in I(n,r)$.   Moreover, for $i\in I(n,r)$, $w\in \Sym(r)$, we have $w\cdot e_i=e_{i\circ w^{-1}}$. 

\q  We may thus regard $E_R^{\otimes r}$ as the $R \Sym(r)$ permutation module $R \Omega$ on $\Omega=I(n,r)$.   Note that $\zeta(\Omega)=\Lambda^+(n,r)$, the set of partitions of $r$ with at most $n$ parts.   We write $\Lambda(n,r)$ for the set of weights, i.e. the set of  $n$-tuples of non-negative integers  $\alpha=(\alpha_1\ldots,\alpha_n)$ such that $\alpha_1+\cdots+\alpha_n=r$.  An element $i$ of $I(n,r)$ has weight $\wt(i)=(\alpha_1,\ldots,\alpha_n)\in \Lambda(n,r)$,  where $\alpha_a=|i^{-1}(a)|$, for $1\leq a\leq n$.  For $\alpha\in \Lambda(n,r)$ we have the orbit $\O_\alpha$ consisting or all $i\in I(n,r)$ such that $\wt(i)=\alpha$. Then  $R \Omega = \bigoplus_{\alpha\in \Lambda(n,r)}  R \O_\alpha$.
\end{example}

\section{Groups of homomorphisms between Young permutation modules}

\q In the situation of the Example 3.2  it follows from the quasi-hereditary structure of $S_\zed(n,r)$ that $V_\zed(\lambda)$ is a free abelian group - indeed an explicit basis is given by Green in \cite[(7.3) Theorem, (ii),(iii)]{CASA}.  Thus,  taking $r=n$,  from Section 3, (2),    we have the following.

\begin{lemma} For all $\lambda,\mu,\tau\in \Par(n)$ the quotient  
$$ \Hom_{\Sym(n)}^{\geq\lambda} (M(\mu),M(\tau))/\Hom_{\Sym(n)}^{>\lambda}(M(\mu),M(\tau))$$
 is torsion free.
\end{lemma}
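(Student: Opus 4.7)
The plan is to reduce the claim to the known quasi-hereditary structure of the classical Schur algebra over $\zed$, as the author's preceding sentence already indicates. Specialise Example 3.2 to the case $r=n$, so that $\Omega = I(n,n)$, the associated endomorphism algebra is $S_\zed(n,n)$, and $\zeta(\Omega) = \Lambda^+(n,n) = \Par(n)$. Under this specialisation every partition $\mu$ of $n$ appears as a stabiliser type: if $\alpha \in \Lambda(n,n)$ has multiset of entries equal to the parts of $\mu$, then the point stabiliser of an element of $\O_\alpha$ is (a conjugate of) $\Sym(\mu)$, so $M_\alpha \cong M(\mu)$ as $\zed\Sym(n)$-modules; choose $\beta$ similarly with $M_\beta \cong M(\tau)$.

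Next I would invoke Green's theorem \cite[(7.3) Theorem]{CASA}, which by producing an explicit $\zed$-basis exhibits $V_\zed(\lambda) = S_\zed(\geq\lambda)/S_\zed(>\lambda)$ as a free abelian group; this is the combinatorial incarnation of the heredity chain witnessing the quasi-heredity of $S_\zed(n,n)$ over $\zed$. Equation (2) of Section 3 then supplies the direct-sum decomposition
$$V_\zed(\lambda) = \bigoplus_{\alpha',\beta' \in \Lambda(n,n)} \Hom_{\Sym(n)}^{\geq\lambda}(M_{\alpha'}, M_{\beta'})/\Hom_{\Sym(n)}^{>\lambda}(M_{\alpha'}, M_{\beta'}).$$
Since a direct summand of a free abelian group is a subgroup of that group, and a subgroup of a free abelian group is again free (hence torsion free), each summand on the right is torsion free.

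To finish, extract the summand indexed by the pair $(\alpha,\beta)$ fixed above. The groups $\Hom_{\Sym(n)}^{\geq\lambda}(P,Q)$ and $\Hom_{\Sym(n)}^{>\lambda}(P,Q)$ are defined purely in terms of factorisations through copies of $M(\lambda')_\zed$ and therefore depend only on the $\zed\Sym(n)$-isomorphism types of $P$ and $Q$; the isomorphisms $M_\alpha \cong M(\mu)$ and $M_\beta \cong M(\tau)$ then identify this summand with the quotient in the statement, which is therefore torsion free.

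The whole argument is a translation exercise built on top of Green's result, so there is no serious obstacle; the only care needed is the bookkeeping identification of the Schur-algebra weight modules $M_\alpha$ with the abstract Young permutation modules $M(\mu)$, and the observation that the $\Hom^{\geq\lambda}$ and $\Hom^{>\lambda}$ subgroups are intrinsic invariants of the $\zed\Sym(n)$-module structure.
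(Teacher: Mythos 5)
Your proposal is correct and is essentially the paper's own argument: the paper likewise specialises Example 3.2 to $r=n$, cites Green's \cite[(7.3) Theorem]{CASA} for the freeness of $V_\zed(\lambda)$ over $\zed$, and reads off the statement from the direct-sum decomposition (2) of Section 3, the identification of the orbit modules $M_\alpha$ with the $M(\mu)$ being implicit there. Your extra remarks (summands of free abelian groups are torsion free; $\Hom^{\geq\lambda}$ and $\Hom^{>\lambda}$ depend only on isomorphism types) just make explicit the bookkeeping the paper leaves to the reader.
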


\q We can improve on this somewhat. A subset $\sigma$ of $\Par(n)$ will be  called co-saturated (also said to be a co-ideal)  if whenever $\lambda,\mu\in \sigma$, $\lambda\in \sigma$ and 
$\lambda \trianglelefteq \mu$ then $\mu\in \sigma$.

\begin{proposition} Let $\sigma,\tau$ be cosaturated subsets of $\Par(n)$ with the $\tau\subseteq \sigma$. Then,  for all $\mu,\nu\in \Par(n)$,  the quotient
$$ \Hom_{\Sym(n)}^{\sigma} (M(\mu),M(\nu))/\Hom_{\Sym(n)}^{\tau}(M(\mu),M(\nu))$$
 is torsion free.
\end{proposition}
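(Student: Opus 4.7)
The plan is to reduce to Lemma 4.1 by filtering $\sigma$ down to $\tau$ one partition at a time. Enumerate $\sigma\setminus\tau=\{\lambda_1,\ldots,\lambda_k\}$ so that each $\lambda_{i}$ is maximal, under dominance, in $\sigma\setminus(\tau\cup\{\lambda_1,\ldots,\lambda_{i-1}\})$, and set $\tau_i=\tau\cup\{\lambda_1,\ldots,\lambda_i\}$. Each $\tau_i$ is cosaturated: any $\kappa\vartriangleright\lambda_{i}$ lies in $\sigma$ by cosaturation of $\sigma$ and, by maximality of $\lambda_i$, cannot lie in $\sigma\setminus\tau_{i-1}$; hence it lies in $\tau_{i-1}$. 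Since an extension of torsion-free $\zed$-modules by a torsion-free $\zed$-module is torsion-free, it suffices by induction on $k$ to treat the case $\sigma=\tau\cup\{\lambda\}$ with $\tau$ cosaturated, $\lambda\not\in\tau$, and $\{\kappa\in\Par(n):\kappa\vartriangleright\lambda\}\subseteq\tau$.

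In this case I take the refinement $\leq$ underlying Lemma 4.1 to be $\trianglelefteq$ itself, so $\geq\lambda$ coincides with $\trianglerighteq\lambda$ and $>\lambda$ with $\vartriangleright\lambda$. The hypothesis gives $\Hom^{>\lambda}\subseteq\Hom^{\tau}$ and $\Hom^{\geq\lambda}=\Hom^{\lambda}+\Hom^{>\lambda}\subseteq\Hom^{\tau\cup\{\lambda\}}$, so the assignment $f+\Hom^{>\lambda}\mapsto f+\Hom^{\tau}$ induces a well-defined surjection
$$\psi\colon \Hom^{\geq\lambda}_{\Sym(n)}(M(\mu),M(\nu))\big/\Hom^{>\lambda}_{\Sym(n)}(M(\mu),M(\nu))\twoheadrightarrow \Hom^{\tau\cup\{\lambda\}}_{\Sym(n)}(M(\mu),M(\nu))\big/\Hom^{\tau}_{\Sym(n)}(M(\mu),M(\nu))$$
whose kernel is $(\Hom^{\geq\lambda}\cap\Hom^{\tau})/\Hom^{>\lambda}$. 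By Lemma 4.1 the source is torsion-free, so it suffices to show this kernel is a torsion group; then it is zero, $\psi$ is an isomorphism, and the target inherits torsion-freeness.

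The main technical step is to verify the kernel vanishes after $-\otimes_\zed\que$. Since $M(\mu)$ and $M(\nu)$ are finitely generated free $\zed$-modules, $\Hom^{\rho}\otimes_\zed\que=\Hom^{\rho}_{\Sym(n)}(M(\mu)_\que,M(\nu)_\que)$ for every $\rho\subseteq\Par(n)$. By the semisimplicity of $\que\,\Sym(n)$ and the decomposition of each $M(\kappa)_\que$ into copies of Specht modules $\Sp(\rho)_\que$ with $\rho\trianglerighteq\kappa$, for a cosaturated set $\rho$ the group $\Hom^{\rho}_{\Sym(n)}(M(\mu)_\que,M(\nu)_\que)$ identifies with the direct sum of the $\Sp(\kappa)_\que$-isotypic components of $\Hom_{\que\Sym(n)}(M(\mu)_\que,M(\nu)_\que)$ for $\kappa\in\rho$. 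Consequently
$$\Hom^{\geq\lambda}_{\Sym(n)}(M(\mu)_\que,M(\nu)_\que)\cap\Hom^{\tau}_{\Sym(n)}(M(\mu)_\que,M(\nu)_\que)=\Hom^{>\lambda}_{\Sym(n)}(M(\mu)_\que,M(\nu)_\que),$$
using $\lambda\not\in\tau$ together with $\{\kappa\vartriangleright\lambda\}\subseteq\tau$. Hence the rationalised kernel is zero, so the kernel is torsion inside a torsion-free module and must vanish. The main obstacle is precisely this intersection identification over $\que$; the filtration bookkeeping that keeps each $\tau_i$ cosaturated is a minor but essential combinatorial point.
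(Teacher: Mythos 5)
Your proof is correct, and its first half coincides with the paper's: the paper likewise filters $\sigma$ down to $\tau$ through cosaturated subsets differing by one partition at a time and uses that an extension of torsion-free groups by a torsion-free group is torsion-free; your choice of each $\lambda_i$ maximal in what remains of $\sigma\setminus\tau$ is exactly the bookkeeping that makes this reduction legitimate (and makes precise the paper's phrase ``$\lambda$ a maximal element''). Where you genuinely diverge is the base case $\sigma=\tau\cup\{\lambda\}$. The paper disposes of it purely formally: it chooses a total order $\preceq$ refining dominance in which the elements of $\tau$ come first, then $\lambda$, then everything else, so that $\Hom^{\sigma}_{\Sym(n)}(M(\mu),M(\nu))=\Hom^{\succeq\lambda}_{\Sym(n)}(M(\mu),M(\nu))$ and $\Hom^{\tau}_{\Sym(n)}(M(\mu),M(\nu))=\Hom^{\succ\lambda}_{\Sym(n)}(M(\mu),M(\nu))$ on the nose, and the quotient is literally one of those covered by Lemma 4.1 (read with $\preceq$ as the chosen refinement of dominance). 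You instead fix $\leq$ to be dominance itself, map the Lemma 4.1 quotient $\Hom^{\trianglerighteq\lambda}/\Hom^{\vartriangleright\lambda}$ onto the desired quotient, and kill the kernel by the rational computation with Specht isotypic components, using semisimplicity of $\que\,\Sym(n)$ and the fact that $\Sp(\rho)_\que$ occurs in $M(\kappa)_\que$ precisely when $\rho\trianglerighteq\kappa$. Both routes bottom out in Lemma 4.1: the paper's total-order trick buys an identification that is an equality of subgroups with no further characteristic-zero input, whereas your version only ever invokes Lemma 4.1 for the dominance order (legitimate, since dominance trivially refines itself) at the cost of the extra isotypic-component argument — which is sound, since intersections of subgroups of the free $\zed$-module $\Hom_{\Sym(n)}(M(\mu),M(\nu))$ commute with the flat base change $-\otimes_\zed\que$, and a torsion submodule of a torsion-free module vanishes.
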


\begin{proof}  If there is a co-saturated subset $\theta$ with $\tau\subset \theta\subset \sigma$ (and $\theta\neq \sigma,\tau$)  and if 
$$ \Hom_{\Sym(n)}^{\sigma} (M(\mu),M(\nu))/\Hom_{\Sym(n)}^{\theta}(M(\mu),M(\nu))$$
and
$$ \Hom_{\Sym(n)}^{\theta} (M(\mu),M(\nu))/\Hom_{\Sym(n)}^{\tau}(M(\mu),M(\nu))$$
are torsion free  then so is 
$$ \Hom_{\Sym(n)}^{\sigma} (M(\mu),M(\nu))/\Hom_{\Sym(n)}^{\tau}(M(\mu),M(\nu)).$$
Thus we are reduced to the case $\tau=\sigma\backslash \{\lambda\}$, where $\lambda$ is a maximal element of $\sigma$.  We choose a total order $\preceq$ on $\Par(n)$ refining $\leq$ such that, writing out the elements of $\Par(n)$ in descending order $\lambda^1\succ \lambda^2 \cdots \succ \lambda^h$ we have  $\tau=\{\lambda^1,\ldots,\lambda^k\}$, $\sigma=\{\lambda^1,\ldots,\lambda^{k+1}\}$ (so $\lambda=\lambda^{k+1}$) for some $k$. Then we have 
\begin{align*} \Hom_{\Sym(n)}^{\sigma}& (M(\mu),M(\nu))/\Hom_{\Sym(n)}^{\tau}(M(\mu),M(\nu)\cr
&= \Hom_{\Sym(n)}^{\succeq\lambda}(M(\mu),M(\nu))/\Hom_{\Sym(n)}^{\succ \lambda}(M(\mu),M(\nu)
\end{align*}
which is torsion free by the Lemma.
\end{proof}

\q Returning  to the general situation we have, by the Proposition and Section 3, (2), the following results.

\begin{corollary}  The $S$-module $V(\lambda)$ is torsion free. 
\end{corollary}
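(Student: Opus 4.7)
The plan is to reduce the corollary directly to Proposition 4.2 via the direct sum decomposition (2) of Section 3. By definition $V(\lambda)=S(\geq\lambda)/S(>\lambda)$, and identity (2) (with $R=\zed$) gives
$$V(\lambda)=\bigoplus_{\alpha,\beta\in\Lambda_\Omega}\Hom_{\Sym(n)}^{\geq\lambda}(M_\alpha,M_\beta)\,\big/\,\Hom_{\Sym(n)}^{>\lambda}(M_\alpha,M_\beta).$$
Since $\Omega$ is a Young $\Sym(n)$-set, each orbit module $M_\alpha=\zed\,\O_\alpha$ is isomorphic, as a $\zed\Sym(n)$-module, to $M(\mu_\alpha)$ for the partition $\mu_\alpha\in\Par(n)$ whose Young subgroup stabilises a chosen point of $\O_\alpha$. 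Thus every summand has the form considered in Proposition 4.2.

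Next I need to verify the hypotheses of the proposition, namely that the sets
$$\sigma=\{\mu\in\Par(n)\mid\mu\geq\lambda\},\qquad \tau=\{\mu\in\Par(n)\mid\mu>\lambda\}$$
are co-saturated (with $\tau\subseteq\sigma$ being obvious). This is exactly where the standing hypothesis on $\leq$ enters: since $\leq$ refines $\trianglelefteq$, whenever $\nu\in\sigma$ and $\nu\trianglelefteq\mu$ we have $\nu\leq\mu$, so $\mu\geq\nu\geq\lambda$ and $\mu\in\sigma$. The same argument applied to a strict inequality shows $\tau$ is co-saturated.

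With these hypotheses in place, Proposition 4.2 applies to each summand in the display above and shows that every such quotient is torsion-free as an abelian group. A direct sum of torsion-free abelian groups is torsion-free, so $V(\lambda)$ is torsion-free. As the $S$-module structure refines the underlying $\zed$-module structure, this is precisely the assertion of the corollary. I do not anticipate any serious obstacle: the corollary is a mechanical consequence of decomposition (2) together with Proposition 4.2, once one observes that the reference to $\leq$ is compatible with the dominance-based notion of co-saturation.
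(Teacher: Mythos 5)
Your argument is correct and is essentially the paper's own proof: the paper derives the corollary precisely from the decomposition (2) of Section 3 together with Proposition 4.2, and your verification that the sets $\{\mu\geq\lambda\}$ and $\{\mu>\lambda\}$ are co-saturated (because $\leq$ refines $\trianglelefteq$) and that each orbit module $M_\alpha$ is isomorphic to some $M(\mu_\alpha)$ just makes explicit what the paper leaves implicit.
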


\begin{corollary} Let $\sigma$ be cosaturated set (with respect to $\leq$). Then $S(\sigma)$ is a pure submodule of $S$.
\end{corollary}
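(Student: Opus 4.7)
The plan is to deduce Corollary 4.4 directly from Proposition 4.2, together with the decomposition of $S$ and $S(\sigma)$ established in Section 3, (1). Recall that ``$S(\sigma)$ is pure in $S$'' means, for abelian groups, that the quotient $S/S(\sigma)$ is torsion free. So the goal is to show that quotient has no torsion.

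First I would write
$$S/S(\sigma)\;=\;\bigoplus_{\alpha,\beta\in\Lambda_\Omega}\Hom_{\Sym(n)}(M_\alpha,M_\beta)\,/\,\Hom_{\Sym(n)}^{\sigma}(M_\alpha,M_\beta),$$
using (1) of Section 3 applied both with the cosaturated set $\sigma$ and with the (trivially cosaturated) set $\Par(n)$, noting that
$\Hom_{\Sym(n)}^{\Par(n)}(M(\mu),M(\nu))=\Hom_{\Sym(n)}(M(\mu),M(\nu))$ because any $\Sym(n)$-map $f\colon M(\mu)\to M(\nu)$ factors as $f\circ\id_{M(\mu)}$, and $\id_{M(\mu)}\in\Hom_{\Sym(n)}(M(\mu),M(\mu))$. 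Since a direct sum of torsion-free abelian groups is torsion free, it suffices to establish the torsion-freeness of each summand.

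Next I would identify each orbit module $M_\alpha$ with $M(\mu_\alpha)$ for some $\mu_\alpha\in\zeta(\Omega)\subseteq\Par(n)$; this is precisely the definition of a Young $\Sym(n)$-set. With this identification, each summand takes the form
$$\Hom_{\Sym(n)}^{\Par(n)}(M(\mu_\alpha),M(\mu_\beta))\,/\,\Hom_{\Sym(n)}^{\sigma}(M(\mu_\alpha),M(\mu_\beta)),$$
and Proposition 4.2, applied with the outer cosaturated set taken to be $\Par(n)$ and the inner one taken to be $\sigma$, yields torsion-freeness of the quotient.

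There is no real obstacle here: the content was already concentrated in Proposition 4.2, which in turn rested on Lemma 4.1 and hence on Green's free basis for $V_\zed(\lambda)$ in the Schur-algebra case. The only subtleties are recognising that $\Par(n)$ itself is a legitimate cosaturated set to feed into Proposition 4.2, identifying $\Hom^{\Par(n)}_{\Sym(n)}$ with the full Hom, and keeping track of the identification $M_\alpha\cong M(\mu_\alpha)$ so that formula (1) of Section 3 matches the hypotheses of Proposition 4.2.
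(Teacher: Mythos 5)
Your proof is correct and follows essentially the paper's own route: the paper deduces the corollary from Proposition 4.2 together with the decomposition of $S(\sigma)$ from Section 3, exactly as you do, via the reduction of purity to torsion-freeness of $S/S(\sigma)$, the identification $\Hom^{\Par(n)}_{\Sym(n)}(M(\mu),M(\nu))=\Hom_{\Sym(n)}(M(\mu),M(\nu))$, and the identification of each orbit module with some $M(\mu_\alpha)$. The only point left implicit (and equally implicit in the paper) is that a set cosaturated with respect to the refinement $\leq$ is automatically cosaturated with respect to $\trianglelefteq$, so Proposition 4.2 does apply with inner set $\sigma$.
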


\section{Cosaturated $\Sym(n)$-sets}

\q     From Corollary 4.4,  if $\sigma$ is any co-saturated subset of $\Par(n)$ then we may identify $\que\otimes_\zed S(\sigma)$ with an $S_{\Omega,\que}$-submodule of $S_{\Omega,\que}$ via the natural map $\que\otimes_\zed S(\sigma)\to S_\que$.

\q We now suppose that $\Omega$ is cosaturated, by which we mean that $\zeta(\Omega)$ is a cosaturated subset of $\Par(n)$.  We check that much of  the structure, described by Green for the Schur algebras  in \cite{CASA},  still stands in this more general case.

\q Let $\sigma$ be a co-saturated subset of the support $\zeta(\Omega)$ of $\Omega$. Let $\mu\in \zeta(\Omega)$.  If $\nabla_\Omega(\mu)_\que$ is a composition factor of $S(\sigma)_\que$  then it is a composition factor of $S(\lambda)_\que$  and hence of $S_\que \xi_\lambda$, for some $\lambda\in \sigma$. Hence we have $\Hom_{\Sym(n)}(S\xi_\lambda,\nabla_\Omega(\mu)_\que)\neq 0$ and so $\mu\geq \lambda$, Lemma 3.1(ii),  and therefore  $\mu\in \sigma$. 

\q We   fix $\lambda\in \zeta(\Omega)$. Then $\Hom_{\Sym(n)}(S_\que \xi_\lambda,\nabla_\Omega(\lambda)_\que)={}^{\lambda}\nabla_\Omega(\lambda)_\que=\que$, by Lemma 3.1(i),  so that $\nabla_\Omega(\lambda)_\que$ is a composition factor of $S(\geq\lambda)_\que$, but not of $S(>\lambda)_\que$.   Now we can write $S(\geq\lambda)_\que=S(>\lambda)\oplus I$ for some ideal $I$  which,  as a left $S_\que$-module,  has only the composition factor $\nabla_\Omega(\lambda)_\que$. Hence $I$ is isomorphic to the  matrix algebra $M_d(\que)$, where $d=\dim \nabla_\Omega(\lambda)_\que$,  and, as a left $S_\que$-module $S(\geq\lambda)/S(>\lambda)$ is a direct sum of $d$ copies of $\nabla_\Omega(\lambda)_\que$.  Hence 
\begin{align*}\dim_\que  {}^\lambda V_\que(\lambda)&=\dim_\que  \Hom_{\Sym(n)}(S_\que \xi_\lambda, V_\que(\lambda))\cr
&=d \dim_\que \Hom_{\Sym(n)}(S_\que \xi_\lambda,\nabla_\Omega(\lambda)_\que)\cr
&=d \dim_\que {}^\lambda \nabla_\Omega(\lambda)_\que=d.
\end{align*}

Thus  $\dim V_\que(\lambda)^\lambda \otimes_\que {}^\lambda V_\que(\lambda)= \dim V_\que(\lambda)$ and  we have:
$$\hbox{ the natural map  }  V_\que(\lambda)^\lambda\otimes_\que {}^\lambda V_\que(\lambda) \to V_\que(\lambda)  \hbox{ is an isomorphism.}  \eqno{(1)}$$

\q We now consider the integral version.  We have the natural surjective map $ V(\lambda)^\lambda\otimes_\zed  {}^\lambda V(\lambda) \to V(\lambda)$.  
But the rank of $V(\lambda)^{\lambda}$ is the dimension of  $V_\que(\lambda)^{\lambda}$, the  rank of ${}^\lambda V(\lambda)$ is the dimension of  ${}^\lambda V_\que(\lambda)$, and the rank of  $V(\lambda)$ is the dimension of  $V_\que(\lambda)$ so that, by (1),  $ V(\lambda)^\lambda\otimes_\zed  {}^\lambda V(\lambda)$ and $V(\lambda)$ have the same rank.  Thus the surjective map  $ V(\lambda)^\lambda\otimes_\zed  {}^\lambda V(\lambda) \to V(\lambda)$ is an isomorphism. 

\q We have shown the following.

\begin{proposition}  Assume $\Omega$  is cosaturated. Then, for each $\lambda\in \Par(n)$,  the map 
$$V(\lambda)^{\lambda}\otimes_\zed  {}^\lambda V(\lambda) \to V(\lambda)$$
induced by multiplication in $S$, is an isomorphism.
\end{proposition}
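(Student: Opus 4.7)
The plan is to reduce the statement to the rational case and then lift back to $\zed$ using the torsion-freeness already secured in Section~4.

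First I would work over $\que$, where $S_{\Omega,\que}$ is semisimple. The aim is to show that $V_\que(\lambda)$ has $\nabla_\Omega(\lambda)_\que$ as its only composition factor, and that the multiplicity equals $d := \dim_\que \nabla_\Omega(\lambda)_\que$. For the first point, suppose $\nabla_\Omega(\mu)_\que$ is a composition factor of $V_\que(\lambda)$; then it is a composition factor of $S_\que\xi_{\lambda'}$ for some $\lambda' \geq \lambda$ in $\zeta(\Omega)$, whence $\mu \trianglerighteq \lambda'$ by Lemma~3.1(ii); the cosaturation hypothesis on $\Omega$ then forces $\mu \in \zeta(\Omega)$ and $\mu \geq \lambda$. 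A symmetric argument, using that $\nabla_\Omega(\lambda)_\que$ cannot already appear in $S(>\lambda)_\que$ (again by Lemma~3.1), pins down $\mu = \lambda$. Because $S(\geq\lambda)_\que / S(>\lambda)_\que$ is then a minimal two-sided ideal sandwich in a semisimple algebra, it is isomorphic to a matrix algebra $M_d(\que)$, and so is a sum of $d$ copies of $\nabla_\Omega(\lambda)_\que$ as a left module. Computing dimensions of the $\lambda$-weight space using Lemma~3.1(i) yields $\dim_\que {}^\lambda V_\que(\lambda) = d$, and similarly $\dim_\que V_\que(\lambda)^\lambda = d$. Since the natural multiplication map is surjective by construction and its domain $V_\que(\lambda)^\lambda \otimes_\que {}^\lambda V_\que(\lambda)$ has dimension $d^2 = \dim_\que V_\que(\lambda)$, it is an isomorphism.

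Next I would transfer to $\zed$. By Corollary~4.3 the $S$-module $V(\lambda)$ is torsion-free, hence free; the same holds for its weight spaces $V(\lambda)^\lambda$ and ${}^\lambda V(\lambda)$ as pure submodules, so the tensor product $V(\lambda)^\lambda \otimes_\zed {}^\lambda V(\lambda)$ is a finitely generated free abelian group. Base-changing to $\que$ and comparing with the rational computation above, the three $\zed$-ranks involved coincide: the rank of $V(\lambda)^\lambda \otimes_\zed {}^\lambda V(\lambda)$ equals $d \cdot d$, and the rank of $V(\lambda)$ equals $\dim_\que V_\que(\lambda) = d^2$. The map in question is surjective by construction, and a surjection between finitely generated free abelian groups of equal rank must be an isomorphism. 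This completes the proof.

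The main obstacle is the rational step, specifically the structural argument that $V_\que(\lambda)$ consists only of copies of $\nabla_\Omega(\lambda)_\que$: it is here that cosaturation of $\Omega$ is essential, since without it a composition factor $\nabla_\Omega(\mu)_\que$ with $\mu \geq \lambda$ but $\mu \notin \zeta(\Omega)$ need not exist as an $S_{\Omega,\que}$-irreducible in the first place, and the clean matrix-algebra description of $S(\geq\lambda)_\que/S(>\lambda)_\que$ would break down. Once this rational isomorphism is secured, the integral lifting is purely a rank-comparison argument made legitimate by Corollary~4.3.
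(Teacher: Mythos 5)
Your overall strategy coincides with the paper's: analyse $V_\que(\lambda)$ inside the semisimple algebra $S_\que$, show it is a single matrix block $M_d(\que)$ with $d=\dim_\que\nabla_\Omega(\lambda)_\que$, deduce the rational isomorphism by a dimension count, and then pass to $\zed$ by comparing ranks and using the torsion-freeness from Section 4 to turn the surjection into an isomorphism; your integral step is fine and even makes explicit a point the paper leaves implicit (freeness of the source forces injectivity).

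There is, however, a genuine gap at the crucial rational step, namely the claim that the only composition factor of $V_\que(\lambda)=S(\geq\lambda)_\que/S(>\lambda)_\que$ is $\nabla_\Omega(\lambda)_\que$. Your first argument correctly shows that any composition factor $\nabla_\Omega(\mu)_\que$ has $\mu\in\zeta(\Omega)$ and $\mu\geq\lambda$ (this is one use of cosaturation), but the proposed ``symmetric argument'' does not exclude $\mu>\lambda$. There is no symmetric counterpart of Lemma 3.1(ii) giving $\mu\trianglelefteq\lambda$: one has ${}^\lambda\nabla_\Omega(\mu)_\que=\Hom_{\Sym(n)}(M(\lambda)_\que,\Sp(\mu)_\que)\neq 0$ for \emph{every} $\mu\trianglerighteq\lambda$, so every such $\nabla_\Omega(\mu)_\que$ genuinely occurs in $S_\que\xi_\lambda$ and hence in $S(\geq\lambda)_\que$. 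The fact you invoke, that $\nabla_\Omega(\lambda)_\que$ does not occur in $S(>\lambda)_\que$, only guarantees that the quotient is nonzero and contains the $\lambda$-block; it says nothing about the factors with $\mu>\lambda$. What kills those factors is the opposite fact: for $\mu\in\zeta(\Omega)$ with $\mu>\lambda$ we have $\xi_\mu\neq 0$ and ${}^\mu\nabla_\Omega(\mu)_\que\neq 0$ (Lemma 3.1(i)), so the entire $\mu$-matrix block of $S_\que$ lies in $S_\que\xi_\mu S_\que\subseteq S(>\lambda)_\que$; since two-sided ideals of the semisimple algebra $S_\que$ are sums of blocks, the $\mu$-isotypic component of $S(\geq\lambda)_\que$ is already inside $S(>\lambda)_\que$ and cancels in the quotient. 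This is where cosaturation enters a second time ($\mu\in\zeta(\Omega)$ is exactly what makes $\xi_\mu\neq 0$); note that your closing remark has this slightly backwards: for $\mu\in\zeta^\trianglerighteq(\Omega)\setminus\zeta(\Omega)$ the module $\nabla_\Omega(\mu)_\que$ does exist as an irreducible $S_{\Omega,\que}$-module --- the failure without cosaturation is that $\xi_\mu=0$, so its block need not be absorbed into $S(>\lambda)_\que$ and could survive in $V_\que(\lambda)$. Once this block-containment argument is inserted (it is precisely how the paper justifies writing $S(\geq\lambda)_\que=S(>\lambda)_\que\oplus I$ with $I\cong M_d(\que)$), the remainder of your proof goes through as written.
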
 

\begin{remark} If $k$ is a field then the corresponding algebras $S_{\Omega,k}$ over $k$ are Morita equivalent to those considered by Mathas  and Soriano in \cite{MathasSoriano}. There they  determined  blocks of such algebras (for the Schur algebras themselves this was done in \cite{DSchurfour}, and for the  quantised case by Cox  in \cite{Cox}).
\end{remark}

\section{Cellularity of  endomorphism  algebras of Young permutation modules}

\q We now establish our  main result, namely that the endomorphism algebra of a Young permutation module has the structure of a cellular algebra.   We first recall the notion  of a cellular algebra due to Graham and Lehrer, \cite{GL}.   (We have made some minor notational changes to be consistent with the notation above. The most serious of these is the reversal of the partial order from the definition given in \cite{GL}.)

\begin{definition}  Let $A$ be an algebra over a commutative ring $R$. A cell datum for   $(\Lambda^+,N,C,\,{}^*\,)$  for $A$ consists of the following.

\medskip

(C1) A partially ordered set $\Lambda^+$ and for each $\lambda\in \Lambda^+$ a finite set $N(\lambda)$ and an injective map $C: \coprod_{\lambda\in \Lambda^+} N(\lambda) \times N(\lambda) \to A$ with image an $R$-basis of $A$. 

(C2) For $\lambda\in \Lambda^+$ and $t,u\in N(\lambda)$ we write $C(t,u)=C^\lambda_{t,u}\in R$. Then   $^*$ is an $R$-linear anti-involution of $A$ such that $(C^\lambda_{t,u})^*= C^\lambda_{u,t}$.

(C3) If $\lambda\in \Lambda^+$ and $t,u\in N(\lambda)$ then for any element $a\in A$ we have 

$$aC^\lambda_{t,u}\equiv \sum_{t'\in N(\lambda)} r_a(t',t) C^\lambda_{t',u}   \hskip 20pt ({\rm mod } \  A(>\lambda))$$
where $r_a(t',t)\in R$ is independent of $u$ and where $A(>\lambda)$ is the $R$-submodule of $A$ generated by $\{ C^\mu_{t'',u''} \vert \mu\in  \Lambda^+, \mu>\lambda  \hbox{ and } t'',u''\in N(\mu)\}$.

\medskip

\q We say that $A$ is a cellular $R$-algebra if it admits a cell datum.
\end{definition}

\q Let $G$ be a finite group.  Let $\Omega$ be a finite $G$-set and let $R$ be a commutative ring.
Now  $G$ acts on $\Omega \times \Omega$. If $\A\subseteq \Omega\times \Omega$ is $G$-stable then we have an element  $a_\A\in \End_G(R\,\Omega)$ satisfying
$$a_\A(x)=\sum_y y$$
where the sum is over all $y\in \Omega$ such that $(y,x)\in \A$.  We write $\Orb_G(\Omega\times \Omega)$ for the set of $G$-orbits in $\Omega \times \Omega$. Then $\End_{R G}(R\,\Omega)$ free over $R$ on basis  $a_\A$, $\A\in \Orb_G(\Omega\times \Omega)$.  We have an involution on $\Omega\times \Omega$ defined by $(x,y)^*=(y,x)$, $x,y\in \Omega$.  For a $G$-stable subset $\A$ of $\Omega\times \Omega$ we write $\A^*$ for the $G$-stable set  $\{(x,y)^* \vert (x,y\in \Omega\}$.   

\q   For $\A, \B \in \Orb_G(\Omega\times \Omega)$ we have 
$$a_\A  a_\B=\sum_{\C \in \Orb_G(\Omega\times \Omega)} n^\C_{\A,\B}a_{\mathcal C}$$
where, for fixed $x\in \A$, $y\in \B$, the coefficient  $n^\C_{\A,\B}$ is the cardinality of the set $\{z\in\C \vert (x,z)\in \A \hbox{ and } (z,y)\in \B \}$.  It follows that $\End_{RG}(R\,\Omega)$ has an involutory anti-automorphism satisfying $a_{\DD}^*=a_{{\DD^*}}$,  for a $G$-stable subset $\DD$ of $\Omega\times \Omega$.  The notion of cellularity has built into it an involutory anti-automorphism ${}^*$  and in the case of endomorphism algebras of permutation modules,  we shall always use the one just defined.

\q We now restrict to the case $G=\Sym(n)$ with $\Omega$ a Young $\Sym(n)$-set as usual and  label by $\O_\alpha$, $\alpha\in \Lambda_\Omega$, the $G$-orbits in $\Omega$.  Now, for $\alpha\in \Lambda_\Omega$ and $x\in \Omega$ we have
$$\xi_\alpha(x)=\begin{cases} x, & \hbox{ if } x\in \O_\alpha; \cr
0, & \hbox{ otherwise.}
\end{cases}$$
Hence  $\xi_\alpha=a_\A$, where $\A=\{(x,x) \vert x\in \O_\alpha\}$ and therefore $\xi_\alpha^*=\xi_\alpha$. In particular we have $\xi_\lambda^*=\xi_\lambda$ for $\lambda\in \zeta(\Omega)$.   Thus we also have $S_{\Omega,R}(\sigma)^*=S_{\Omega,R}(\sigma)$, for $\sigma\subseteq \Par(n)$.

\q Note that if $\Gamma$ is a $G$-stable subset of $\Omega$ then we have the idempotent $e_\Gamma \in S_{\Omega,R}$ given on elements  of $\Omega$ by 
$$e_\Gamma (x)=\begin{cases} x,\hbox{ if } x\in \Gamma \, ;\cr
0, \hbox{ if } x\not\in \Gamma.
\end{cases}$$
Thus $e_\Gamma =a_\C$  where $\C=\{ (y,y)\vert y\in \Gamma \}$ and  $e_\Gamma^*=e_\Gamma$.

\q So now let $\Gamma$ be a Young $\Sym(n)$-set and let $\Omega$ be a co-saturated Young  $\Sym(n)$-set containing $\Gamma$.   We have the idempotent $e=e_\Gamma\in S_{\Omega,R}$ as above and $S_{\Gamma,R}=\End_{\Sym(n)}(R \Gamma)$ is naturally identified with $eS_{\Omega,R} e$.   

\begin{lemma} For $\lambda\in \zeta(\Omega)$ we have $e\nabla_\Omega(\lambda)_\que\neq 0$ if and only if  $\lambda\in \zeta^\trianglerighteq (\Gamma)$.

\end{lemma}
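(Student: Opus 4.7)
The plan is to identify $e\nabla_\Omega(\lambda)_\que$ with $\Hom_{\Sym(n)}(\que\Gamma,\Sp(\lambda)_\que)$ and then read off the answer from the classical decomposition of Young permutation modules in characteristic zero.

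First I would observe that $e=e_\Gamma$ is exactly the sum $\sum_{\alpha\in \Lambda_\Gamma}\xi_\alpha$, where $\Lambda_\Gamma\subseteq \Lambda_\Omega$ indexes those orbits $\Ocal_\alpha$ lying in $\Gamma$. As in the proof of Lemma 3.1, each $\xi_\alpha$ acts on $\Hom_{\Sym(n)}(\que\Omega,\Sp(\lambda)_\que)$ as precomposition with $\xi_\alpha$, i.e.\ by restriction to $\que\Ocal_\alpha$. Summing over $\alpha\in \Lambda_\Gamma$ and using $e\que\Omega=\que\Gamma$ gives
$$e\nabla_\Omega(\lambda)_\que \;\cong\; \Hom_{\Sym(n)}(\que\Gamma,\Sp(\lambda)_\que).$$

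Next, the $\Sym(n)$-set $\Gamma$ is the disjoint union of its orbits, so $\que\Gamma=\bigoplus_{\alpha\in \Lambda_\Gamma}\que\Ocal_\alpha$. For each $\alpha$, the chosen point stabiliser of $\Ocal_\alpha$ is a Young subgroup $\Sym(\nu_\alpha)$ with $\nu_\alpha\in\zeta(\Gamma)$, and $\que\Ocal_\alpha\cong M(\nu_\alpha)_\que$ as $\Sym(n)$-modules; as $\alpha$ varies the partitions $\nu_\alpha$ exhaust $\zeta(\Gamma)$. Hence
$$e\nabla_\Omega(\lambda)_\que \;\cong\; \bigoplus_{\alpha\in \Lambda_\Gamma}\Hom_{\Sym(n)}(M(\nu_\alpha)_\que,\Sp(\lambda)_\que).$$
Now invoking the decomposition of $M(\nu)_\que$ recalled in Section 2 together with semisimplicity of $\que\Sym(n)$, the $\alpha$-summand is nonzero if and only if $\Sp(\lambda)_\que$ occurs as a summand of $M(\nu_\alpha)_\que$, i.e.\ if and only if $\lambda\trianglerighteq \nu_\alpha$. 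Therefore $e\nabla_\Omega(\lambda)_\que\neq 0$ if and only if $\lambda\trianglerighteq\nu$ for some $\nu\in\zeta(\Gamma)$, which is exactly the condition $\lambda\in\zeta^\trianglerighteq(\Gamma)$.

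The only slightly delicate point is the very first identification: one has to be sure that the left $S_{\Omega,\que}$-action of the idempotent $e$ on $\nabla_\Omega(\lambda)_\que$ really does coincide with restriction to $\que\Gamma$. This is essentially a rerun of the calculation already appearing in the proof of Lemma 3.1, extended from a single $\xi_\alpha$ to the sum $e_\Gamma=\sum_{\alpha\in\Lambda_\Gamma}\xi_\alpha$; the rest of the argument is a straightforward bookkeeping of orbits followed by a direct appeal to the Specht decomposition.
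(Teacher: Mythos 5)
Your proof is correct and follows essentially the same route as the paper: write $e=\sum_{\alpha\in\Lambda_\Gamma}\xi_\alpha$, identify the resulting weight spaces of $\nabla_\Omega(\lambda)_\que$ with $\Hom_{\Sym(n)}(M(\mu)_\que,\Sp(\lambda)_\que)$ for $\mu\in\zeta(\Gamma)$ (exactly as in the proof of Lemma 3.1), and conclude via the characteristic-zero decomposition of $M(\mu)_\que$ into Specht modules, which gives nonvanishing precisely when $\mu\trianglelefteq\lambda$. The only cosmetic difference is that you package the sum of weight spaces as $\Hom_{\Sym(n)}(\que\Gamma,\Sp(\lambda)_\que)$ before decomposing by orbits, whereas the paper argues orbitwise directly.
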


\begin{proof} We have $e=\sum_{\alpha\in \Lambda_\Gamma} \xi_\alpha$.  Hence $e \nabla_\Omega(\lambda)_\que\neq 0$ if and only if \\
$\xi_\alpha \nabla_\Omega(\lambda)_\que\neq 0$ i.e. $\sum_{\beta \in \Lambda_\Omega} \xi_\alpha \Hom_{\Sym(n)} (M_{\beta,\que},\Sp(\lambda)_\que)\neq 0$, for some $\alpha\in \Lambda_\Gamma$.  Hence $e\nabla_\Omega(\lambda)_\que\neq 0$ if and only if $\Hom_{\Sym(n)} (M_{\beta,\que},\Sp(\lambda)_\que)  \neq 0$ for some $\beta\in \Lambda_\Gamma$, i.e. if and only if $\Hom_{\Sym(n)}(M(\mu)_\que,\Sp(\lambda))\neq 0$ for some $\mu\in \zeta(\Gamma)$, i.e. if and only if  there exists $\mu\in \zeta(\Gamma)$ such that $\mu \trianglelefteq \lambda$.
\end{proof}

\q We fix a partial order $\leq$ on $\zeta(\Omega)$ refining the partial order $ \trianglelefteq $.

\q Let $\lambda\in \zeta(\Omega)$. We have the section  $V(\lambda)=S(\geq\lambda)/S(>\lambda)$ of $S=S_\Omega$.

\q We write $J^\op$ for the opposite ring of a ring $J$.  We write $S^\env$ for the enveloping algebra $S\otimes_\zed S^\op$.  We identify an $(S,S)$-bimodule with a left $S^\env$-module in the usual way.

\q We have the idempotent $\tilde e=e\otimes e\in S^\env$ and hence the Schur functor $\tilde f: \mod(S^\env)\to \mod( \tilde e S^\env \tilde e)$ as in \cite[Chapter 6]{EGS}. Moreover, \\
 $\tilde e S^\env \tilde e= eSe \otimes_\zed (eSe)^\op$.  Now $\tilde  f$ is exact so applying it  to the isomorphism $V(\lambda)^\lambda  \otimes_\zed {}^\lambda V(\lambda)\to V(\lambda)$ of Proposition 5.1  we obtain an isomorphism  
$$e \,  V(\lambda)^\lambda  \otimes_\zed {}^\lambda V(\lambda)\, e \to e V(\lambda)e \eqno{(1)}.$$

\q Now $\xi_\lambda S + S(>\lambda)=(S\xi_\lambda + S(>\lambda))^*$ so that $eV(\lambda)e\neq 0$ if and only if $e V(\lambda)^\lambda\neq 0$.  Moreover, $V(\lambda)^\lambda$ is a $\zed$-form of $\nabla(\lambda)_\que$ so that $eV(\lambda)e\neq 0$ if and only if $e\nabla_\Omega(\lambda)_\que\neq 0$.  Hence by,  Lemma 6.2,:

$$eV(\lambda)e\neq 0 \hbox{ if and only if }   \lambda\in \zeta^\trianglerighteq (\Gamma). \eqno{(2)}.$$

\q We now assemble our cell data.  We have the set $\Lambda^+=\zeta^\trianglerighteq(\Gamma)$ with partial order induced from the partial order $\leq $ on $\zeta(\Omega)$ (and also denoted $\leq$).   Let $\lambda\in \Lambda^+$. We let $n_\lambda=\dim_\que e\nabla_\Omega(\lambda)_\que$ and set $N(\lambda)=\{1,\ldots,n_\lambda\}$.  The rank of $eV(\lambda)^\lambda$ is  $n_\lambda$.  We choose elements $d_{\lambda,1},\ldots,d_{\lambda,n_\lambda}$ of $eS\xi_\lambda$ such that the elements $d_{\lambda,1}+S(>\lambda),\ldots, d_{\lambda,n_\lambda} + S(>\lambda)$ form a $\zed$-basis of $eV(\lambda)^\lambda= (eS\xi_\lambda + S(>\lambda))/S(>\lambda)$. Then $d_{\lambda,1}^*,\ldots,d_{\lambda,n_\lambda}^*$ are elements of $(eS\xi_\lambda)^*=\xi_\lambda Se$ and the elements $d_{\lambda,1}^*+ S(>\lambda), \ldots, d_{\lambda,n_\lambda}^* + S(>\lambda)$ form a $\zed$-basis of  ${}^\lambda V(\lambda)e =(\xi_\lambda S e + S(>\lambda))/S(>\lambda)$.  Thus $d_{\lambda,t}d_{\lambda,u}^*$ belongs to $eS\xi_\lambda Se$.  We define $C: \coprod_{\lambda\in \Lambda^+} N(\lambda)\times N(\lambda)\to eSe$ by $C(t,u)=C^\lambda_{t,u} =d_{\lambda,t}d_{\lambda,u}^*$, for $t,u\in N(\lambda)$.  

\q Let $M$ be the $\zed$-span of all $C^\lambda_{t,u}$, $\lambda\in \Lambda^+$, $t,u\in N(\lambda)$.   We claim that $M=eSe$.   We have $S=\sum_{\lambda\in \Lambda_\Omega} S\xi_\lambda S$ so that if the claim is false then there exists $\lambda\in \Lambda_\Omega$ such that $eS\xi_\lambda Se\not\subseteq M$. In that case we choose $\lambda$ minimal with this property. First suppose that $\lambda\not\in \zeta^\trianglerighteq (\Gamma)$. Then we have $eV(\lambda)e=0$, by (2),  i.e., $eS\xi_\lambda S e\subseteq S(>\lambda)$ and so $eS\xi_\lambda Se \subseteq eS(>\lambda)e$.  However,  $eS(>\lambda)e=\sum_{\mu>\lambda} eS\xi_\mu Se\subseteq M$, by minimality of $\lambda$  and so $eS\xi_\lambda S e\subseteq M$.  Thus we have $\lambda\in \Lambda^+=\zeta^\trianglerighteq (\Gamma)$. 

\q Now by (1)  the map 
$$(eS\xi_\lambda + S(>\lambda))\otimes_\zed (\xi_\lambda S e + S(>\lambda))\to eS\xi_\lambda Se + S(>\lambda)$$
induced by multiplication is surjective. Moreover we have $eS\xi_\lambda + S(>\lambda)=\sum_{t=1}^{n_\lambda} \zed d_{\lambda,t} +S(>\lambda)$ and  $\xi_\lambda Se + S(>\lambda)=\sum_{u=1}^{n_\lambda} \zed d_{\lambda,u}^* +S(>\lambda)$ so that 
$$eS\xi_\lambda Se \subseteq \sum_{t,u=1}^{n_\lambda} \zed d_{\lambda,t} d_{\lambda,u}^*+S(>\lambda)=\sum_{t,u=1}^{n_\lambda} \zed C^\lambda_{t,u}+S(>\lambda)$$
and hence 
$$eS\xi_\lambda Se\subseteq \sum_{t,u=1}^{n_\lambda} \zed C^\lambda_{t,u}+eS(>\lambda)e.$$
But now $\sum_{t,u=1}^{n_\lambda} \zed C^\lambda_{t,u}\subseteq M$ by definition and again $eS(>\lambda)e \subseteq M$ by the minimality of $\lambda$ so that $eS\xi_\lambda Se\subseteq M$ and the claim is established.

\q The elements $C^\lambda_{t,u}$, $\lambda\in \Lambda^+$, $1\leq t,u\leq n_\lambda$ form a spanning set  of $eS_\Omega e= S_{\Gamma}$.  But the rank of $eSe$ is the $\que$-dimension of $eS_\que e$, i.e., the $\que$-dimension of $S_{\Gamma,\que}$ and this is $\sum_{\lambda\in \Lambda^+} (\dim e \nabla_\Omega(\lambda))^2$ by Remark 2.2. Hence the elements $C^\lambda_{t,u}$, with $\lambda\in \Lambda^+$, $t,u\in N(\lambda)$, form a $\zed$-basis of $eSe$.

\q We have now checked the defining properties (C1) and (C2) of cell structure  and it remains to check (C3).  We fix $\lambda\in \Lambda^+$ and let $1\leq t,u\leq n_\lambda$.  Let $a\in eSe$. 
Then we have 
$$aC^\lambda_{t,u}=a d_{\lambda,t} d_{\lambda,u}^*.$$
Now we have $\sum_{i=1}^{n_\lambda} \zed d_{\lambda,i} + S(>\lambda)=eS\xi_\lambda + S(>\lambda)$ so we may write $ad_{\lambda,t}=\sum_{t'=1}^{n_\lambda} r_a(t',t) d_{\lambda,t'} + y $ for some integers $ r_a(t',t) $ and an element $y$ of $S(>\lambda)$.  Thus we  have 
\begin{align*}aC^\lambda_{t,u}=&a d_{\lambda,t} d_{\lambda,u}^*=\sum_{t'=1}^{n_\lambda} r_a(t',t) d_{\lambda,t'} d_{\lambda,u}^* + y d_{\lambda,u}^* \cr
&=\sum_{t'=1}^{n_\lambda} r_a(t',t) C^\lambda_{t',u} + y d_{\lambda,u}^*
\end{align*}
and hence 
$$aC^\lambda_{t,u}=\sum_{t'=1}^{n_\lambda} r_a(t',t) C^\lambda_{t',u}   \hskip 10pt (\hbox{mod}  \  S(>\lambda)).$$
 \q We have thus checked defining property (C3) and hence proved the following.

  \begin{theorem}   Let $\Gamma$ be a Young $\Sym(n)$-set. Then  $(\Lambda^+, N,C,{}^*)$ is a cell structure on $S_{\Gamma,\zed}=eS_{\Omega,\zed}e=\End_{\Sym(n)}(\zed \Gamma)$. 
  \end{theorem}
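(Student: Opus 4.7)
The plan is to assemble the cell datum $(\Lambda^+, N, C, {}^*)$ precisely as set up in the paragraphs preceding the theorem, and then verify the three Graham--Lehrer axioms (C1)--(C3). Take $\Lambda^+ = \zeta^\trianglerighteq(\Gamma)$ with the partial order induced from the chosen refinement $\leq$ of dominance, set $n_\lambda = \dim_\que e\nabla_\Omega(\lambda)_\que$ and $N(\lambda) = \{1,\ldots,n_\lambda\}$, lift a $\zed$-basis of the free $\zed$-module $eV(\lambda)^\lambda$ to elements $d_{\lambda,1},\ldots,d_{\lambda,n_\lambda} \in eS\xi_\lambda$, and define $C^\lambda_{t,u} = d_{\lambda,t} d^*_{\lambda,u} \in eSe$. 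The anti-involution is the one induced on $\End_{\Sym(n)}(\zed\Gamma)$ by $(x,y)\mapsto (y,x)$ on $\Gamma\times\Gamma$; note $e^* = e$ and $\xi_\lambda^* = \xi_\lambda$, so $^*$ restricts sensibly to $eSe$.

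The heart of the argument is (C1), namely that the $C^\lambda_{t,u}$ form a $\zed$-basis of $eSe$. For the spanning statement, apply the Schur functor $\tilde f$ associated with $\tilde e = e \otimes e \in S^\env$ to the isomorphism of Proposition~5.1; since $\tilde f$ is exact this yields an isomorphism
$$eV(\lambda)^\lambda \otimes_\zed {}^\lambda V(\lambda) e \to eV(\lambda)e \qquad (\lambda \in \zeta(\Omega)).$$
Let $M \subseteq eSe$ be the $\zed$-span of the $C^\lambda_{t,u}$. Using the decomposition $S = \sum_\lambda S\xi_\lambda S$ and arguing by minimal counterexample, suppose $\lambda$ is $\leq$-minimal with $eS\xi_\lambda Se \not\subseteq M$. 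If $\lambda \notin \zeta^\trianglerighteq(\Gamma)$, then Lemma~6.2 forces $eV(\lambda)e = 0$, hence $eS\xi_\lambda S e \subseteq eS(>\lambda)e \subseteq M$ by minimality, a contradiction. If $\lambda \in \Lambda^+$, the displayed isomorphism together with the choice of the $d_{\lambda,t}$ shows that modulo $S(>\lambda)$ every element of $eS\xi_\lambda Se$ lies in $\sum_{t,u} \zed\, d_{\lambda,t} d^*_{\lambda,u}$, and again $eS(>\lambda)e \subseteq M$ by minimality; contradiction. Linear independence is then obtained by a rank count: $S_{\Gamma,\zed}$ has the same $\zed$-rank as the $\que$-dimension of $S_{\Gamma,\que}$, which equals $\sum_{\lambda\in \Lambda^+}(\dim_\que e\nabla_\Omega(\lambda)_\que)^2 = \sum_{\lambda\in \Lambda^+} n_\lambda^2$ by Remark~2.2 (applied to $\Gamma$, using Lemma~6.2 to identify the irreducibles of $S_{\Gamma,\que}$ as those $e\nabla_\Omega(\lambda)_\que$ with $\lambda \in \zeta^\trianglerighteq(\Gamma)$); this is exactly the cardinality of the spanning set.

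Axiom (C2) is now immediate from the construction, since $(C^\lambda_{t,u})^* = (d_{\lambda,t} d^*_{\lambda,u})^* = d_{\lambda,u} d^*_{\lambda,t} = C^\lambda_{u,t}$. For (C3), given $a \in eSe$ and $\lambda \in \Lambda^+$, use the fact that the classes of $d_{\lambda,1},\ldots,d_{\lambda,n_\lambda}$ form a $\zed$-basis of $eV(\lambda)^\lambda = (eS\xi_\lambda + S(>\lambda))/S(>\lambda)$. Since $a d_{\lambda,t} \in eS\xi_\lambda$, we may write $a d_{\lambda,t} = \sum_{t'} r_a(t',t)\, d_{\lambda,t'} + y$ with $r_a(t',t) \in \zed$ depending only on $a$ and $t$, and $y \in S(>\lambda)$. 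Multiplying on the right by $d^*_{\lambda,u} \in \xi_\lambda Se$ and noting that $S(>\lambda) \cdot \xi_\lambda Se \subseteq S(>\lambda)$ (since $S(>\lambda)$ is a two-sided ideal in $S$), we obtain the required congruence $a C^\lambda_{t,u} \equiv \sum_{t'} r_a(t',t)\, C^\lambda_{t',u}$ modulo $S(>\lambda)$, and hence modulo the submodule of $eSe$ spanned by the $C^\mu_{t'',u''}$ with $\mu > \lambda$.

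The main obstacle is the verification of (C1): the delicate part is ensuring spanning, which hinges on pushing the integral tensor-product isomorphism of Proposition~5.1 through the Schur functor associated with $e$, and on using Lemma~6.2 to control what happens at weights $\lambda \in \zeta(\Omega) \setminus \zeta^\trianglerighteq(\Gamma)$. Everything else---the involution property, the triangular multiplication rule, and the matching of ranks---follows routinely once (C1) is in place.
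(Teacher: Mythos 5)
Your proposal is correct and follows essentially the same route as the paper: the same cell datum $(\Lambda^+,N,C,{}^*)$, the same spanning argument via the Schur functor applied to Proposition~5.1 together with Lemma~6.2 and a minimal-counterexample induction, the same rank count via Remark~2.2, and the same verifications of (C2) and (C3). No gaps to report.
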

  
  \q One now obtains a cell structure on $\End_{\Sym(n)}(R \Gamma)$, for any commutative ring $R$ by base change.
  
  \q There  is also the question of when an endomorphism algebra over a field $k$  is quasi-hereditary.  If $k$ has characteristic $0$ then $\End_{\Sym(n)}(k\Gamma)$ is semisimple and there is nothing to consider. We assume now that the characteristic of $k$ is $p>0$.  By \cite[Remark 3.10]{GL} (see also \cite{KX1}, \cite{KX2}) $\End_{\Sym(n)}(k\Gamma)$  is quasi-hereditary if and only if the number of irreducible 
  $\End_{\Sym(n)}(k\Gamma)$-modules (up to isomorphism)  is equal to the length of the cell chain,  i.e., $|\zeta^\trianglerighteq(\Gamma)|$. By  Lemma 2.4 , the number of irreducible $\End_{\Sym(n)}(k\Gamma)$-modules  is  
  $|\zeta^{\trianglerighteq_p} (\Gamma)|$.  Moreover, we have   $\zeta^{\trianglerighteq_p} (\Gamma)\subseteq  \zeta^{\trianglerighteq} (\Gamma)$ and so  $\End_{\Sym(n)}(k\Gamma)$ is quasi-hereditary if and only if $\zeta^{\trianglerighteq } (\Gamma)\subseteq  \zeta^{\trianglerighteq_p} (\Gamma)$. We spell this out in the following result.

\begin{theorem} Let $k$ be a field of  characteristic $p>0$ and let $\Gamma$ be a Young $\Sym(n)$-set. Then the endomorphism algebra $\End_{\Sym(n)}(k\Gamma)$ of the permutation module $k\Gamma$ is quasi-hereditary if and only if for every partition $\lambda$ of $n$ such that the Young subgroup $\Sym(\lambda)$ appears as the stabiliser of a point  of $\Gamma$ and every partition $\mu \trianglerighteq \lambda$ there exists a partition $\tau$ such that $\Sym(\tau)$ appears as a point stabiliser and  such that $\mu$ $p$-dominates $\tau$, i.e.,  there exists a weak $p$ expansion  $\tau=\sum_{i\geq 0} p^i\gamma(i)$, with $\gamma(i)\in \Lambda(n)$, and ${\overline {\gamma(i)}}\trianglelefteq \mu(i)$ for all $i$ (where $\mu=\sum_{i\geq 0} p^i\mu(i)$ is the base $p$-expansion of $\mu$ and where ${\overline {\gamma(i)}}$ is the partition obtained by writing the parts of $\gamma(i)$ in descending order, for $i\geq 0$).
\end{theorem}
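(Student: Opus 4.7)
The strategy is to combine the cellular structure from Theorem 6.3 with the Graham--Lehrer characterisation of quasi-heredity: by \cite[Remark 3.10]{GL} a cellular algebra $A$ over a field is quasi-hereditary if and only if the cardinality of its cell-datum labelling poset $\Lambda^+$ equals the number of isomorphism classes of simple $A$-modules. Characteristic zero is trivial (the algebra is semisimple), so I would work under the assumption $\operatorname{char} k = p > 0$.

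The next step is to compute both sides of that equality for the algebra $\End_{\Sym(n)}(k\Gamma)$. By Theorem 6.3 the cell datum has labelling poset $\Lambda^+ = \zeta^\trianglerighteq(\Gamma)$, so the length of the cell chain is $|\zeta^\trianglerighteq(\Gamma)|$; and by Lemma 2.4 the pairwise non-isomorphic simple $\End_{\Sym(n)}(k\Gamma)$-modules are indexed by $\zeta^{\trianglerighteq_p}(\Gamma)$, giving $|\zeta^{\trianglerighteq_p}(\Gamma)|$ simples. Because $p$-dominance implies ordinary dominance (the remark following Definition 2.3), the containment $\zeta^{\trianglerighteq_p}(\Gamma) \subseteq \zeta^\trianglerighteq(\Gamma)$ is automatic, so equality of cardinalities is equivalent to the reverse inclusion $\zeta^\trianglerighteq(\Gamma) \subseteq \zeta^{\trianglerighteq_p}(\Gamma)$.

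Finally I would unpack this set inclusion into the partition-theoretic statement. By the defining clauses of the two sets, $\zeta^\trianglerighteq(\Gamma) \subseteq \zeta^{\trianglerighteq_p}(\Gamma)$ says precisely that for every $\lambda \in \zeta(\Gamma)$ and every partition $\mu \trianglerighteq \lambda$ there exists $\tau \in \zeta(\Gamma)$ with $\mu \trianglerighteq_p \tau$; substituting the explicit definition of $p$-dominance from Definition 2.3 yields the wording of Theorem 6.4. The only real obstacle in this argument has already been discharged in establishing the cell structure in Theorem 6.3 and identifying the simples in Lemma 2.4, so the remaining proof reduces to bookkeeping of definitions together with a single invocation of the Graham--Lehrer criterion.
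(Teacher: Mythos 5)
Your proposal is correct and follows the paper's own argument essentially verbatim: the Graham--Lehrer criterion (Remark 3.10 of \cite{GL}) applied to the cell datum of Theorem 6.3, the count of simples via Lemma 2.4, the automatic inclusion $\zeta^{\trianglerighteq_p}(\Gamma)\subseteq\zeta^{\trianglerighteq}(\Gamma)$ from the fact that $p$-dominance refines dominance, and the reduction to the reverse inclusion. Nothing further is needed.
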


\begin{remark} We emphasise  that the above gives a criterion for the endomorphism algebra $\End_{\Sym(n)}(k\Gamma)$ of the Young permutation module $k\Gamma$ to be quasi-hereditary with respect to any labelling of the simple modules by a partially ordered set (which may have nothing to do with those considered above) thanks to the result of K\"onig and Xi, \cite[Theorem 3.]{KX2}. Thus if $\Gamma$ does not satisfy the condition above then $S_{\Gamma,k}$ can not have finite global dimension by \cite[Theorem 3]{KX2} and hence is not quasi-hereditary. 
 \end{remark}

\section{Example: Tensor Powers}

\q Let $R$ be a commutative ring and let $E_R$ be a free $R$-module on basis $e_{1,R},\ldots,e_{n,R}$.  Let $r$ be a positive integer and let  $I(n,r)$ be the set described in Example 3.2.   Then the $r$-fold tensor product $E_R^{\otimes r}=E_R\otimes_R \otimes \cdots \otimes_R E_R$ has $R$-basis $e_{i,R}=e_{i_1,R}\otimes\cdots e_{i_r,R}$, $i\in I(n,r)$,  and we thus identify  $E_R^{\otimes r}$ with $R I(n,r)$, the free $R$-module on $I(n,r)$.  

\begin{remark} The symmetric group $\Sym(r)$ acts on $E_R^{\otimes r}$ by place permutations, i.e. $w\cdot e_{i,R}=e_{i\circ w^{-1},R}$, for $w\in \Sym(r)$, $i\in I(n,r)$.  Thus we may regard $E_R^{\otimes r}$ as the permutation module $R I(n,r)$, with $\Sym(r)$, acting on $I(n,r)$ by $w\cdot i=i\circ w^{-1}$.   The endomorphism algebra $\End_{\Sym(r)}(E^{\otimes r}_R)$ is the Schur algebra $S_R(n,r)$.

\q The stabiliser of $i\in I(n,r)$ is the direct product of the symmetric groups on the fibres of $i$ (regarded as a subgroup of $\Sym(r)$ in the usual way). Hence $I(n,r)$ is a Young $\Sym(r)$-set. Hence $E_R^{\otimes r}$ is a Young permutation module and hence $S_R(n,r)$ is cellular.  Moreover, $\zeta(I(n,r))$ is the set $\Lambda^+(n,r)$ of all partitions of $r$ with at most $n$ parts. This is a co-saturated set and hence for a prime $p$   we have $\zeta(I(n,r))=\zeta^{\trianglerighteq}(I(n,r))=\zeta^{\trianglerighteq_p}(I(n,r))$.  Hence, for a field $k$ of characteristic $p$ the Schur algebra  $S_k(n,r)$ is  quasi-hereditary.  

\q However, this is not a new proof since our treatment relies crucially  on  a detail from Green's analysis of $S_\zed(n,r)$ as in \cite{CASA}, at least in the case $n=r$.  (See Example 3.2 above and the  proofs of the results of Section 4.)
\end{remark}

\q We now regard $E_R$ as an $R\Sym(n)$-module  with $\Sym(n)$ permuting the basis $e_{1,R},\ldots,e_{n,R}$ in the natural way.  This  action induces an action on the tensor product $E_R^{\otimes r}$. Specifically, we have $w\cdot e_{i,R}=e_{w\circ i,R}$, for $w\in \Sym(n)$, $i\in I(n,r)$, and we thus regard $E^{\otimes r}_R$ as the permutation module $R I(n,r)$.  For $w\in \Sym(n)$, $i\in I(n,r)$ we have $w\circ i=i$ if and only if $w$ acts as the identity on the image of $i$, so that the stabiliser of $i$ is the group of symmetries of the complement of the image of $i$ in $\{1,\ldots,n\}$, identified  with a subgroup of $\Sym(n)$ in the usual way. Thus  $I(n,r)$ is a Young $\Sym(n)$-set so we have the following consequence of   Theorem 6.3, answering  a question raised in \cite{BDM}.

\begin{proposition}  The endomorphism algebra \\
$\End_{\Sym(n)}( E^{\otimes r}_R)
=\End_{\Sym(n)}(R I(n,r))$ is a cellular algebra. 
\end{proposition}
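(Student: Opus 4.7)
The plan is to reduce the statement directly to Theorem 6.3 by verifying that $I(n,r)$, equipped with the $\Sym(n)$-action $w\cdot i = w\circ i$, is a Young $\Sym(n)$-set. Under the basis identification $e_{i,R} \leftrightarrow i$, the $R\Sym(n)$-module $E_R^{\otimes r}$ becomes the permutation module $R\,I(n,r)$, so
$$\End_{\Sym(n)}(E_R^{\otimes r}) = \End_{\Sym(n)}(R\,I(n,r)) = S_{I(n,r),R}.$$
Once $I(n,r)$ is recognised as a Young $\Sym(n)$-set, Theorem 6.3 gives a cell structure over $\zed$, and base change delivers the result over an arbitrary commutative ring $R$.

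The first step is the stabiliser computation. Fix $i\in I(n,r)$ and let $J_i=\{1,\ldots,n\}\setminus \mathrm{im}(i)$, with $|J_i|=n-s$ where $s=|\mathrm{im}(i)|$. The condition $w\circ i = i$ means $w$ fixes every element of $\mathrm{im}(i)$ pointwise, i.e. $w$ lies in the subgroup $\Sym(J_i)$ of $\Sym(n)$ acting as the full symmetric group on $J_i$ and trivially on $\mathrm{im}(i)$. This subgroup is conjugate in $\Sym(n)$ to the Young subgroup $\Sym(\lambda)$ with $\lambda=(n-s,1,\ldots,1)$ having $s$ parts equal to $1$. Hence every point stabiliser is conjugate to a Young subgroup, so $I(n,r)$ is a Young $\Sym(n)$-set, as required.

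With this identification in hand, Theorem 6.3 applies with $\Gamma = I(n,r)$ and gives a cell datum $(\Lambda^+, N, C, {}^*)$ on $S_{I(n,r),\zed} = \End_{\Sym(n)}(\zed\, I(n,r))$. Extending scalars to $R$, the $\zed$-basis $\{C^\lambda_{t,u}\}$ becomes an $R$-basis of $R\otimes_\zed S_{I(n,r),\zed} \cong \End_{\Sym(n)}(R\,I(n,r))$ (the identification is immediate from the free $R$-module structure of $E_R^{\otimes r}$ and compatibility of $\Hom$ with flat base change on a finitely generated free module). Properties (C1)--(C3) pass through this base change verbatim, since the involution $^*$ and the structure constants $r_a(t',t)$ are $R$-linear and defined modulo the $R$-submodule generated by basis elements indexed by strictly larger partitions. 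This yields the cellular structure on $\End_{\Sym(n)}(E_R^{\otimes r})$.

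There is no substantive obstacle: all the work is done inside Theorem 6.3, and the only thing specific to this example is the elementary stabiliser calculation above. The one point to be pedantic about is the base change, but since $E_R^{\otimes r}$ is free of finite rank over $R$ and the Young $\Sym(n)$-set structure of $I(n,r)$ does not depend on $R$, this is immediate.
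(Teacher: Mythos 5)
Your proposal is correct and follows essentially the same route as the paper: identify $E_R^{\otimes r}$ with the permutation module $R\,I(n,r)$, observe that the stabiliser of $i$ is the symmetric group on the complement of $\mathrm{im}(i)$ (hence conjugate to the Young subgroup $\Sym((n-s,1^s))$), so that $I(n,r)$ is a Young $\Sym(n)$-set, and then invoke Theorem 6.3 together with base change to an arbitrary commutative ring. The paper presents this same stabiliser computation in the paragraph preceding the proposition and treats the base-change step exactly as you do.
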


\q The  support of $I(n,r)$ consists of hook partitions, more precisely we have 
$$\zeta(I(n,r))=\{(a,1^b) \vert a+b=n, 1\leq b \leq r\}.$$

\q Hence we have 
$$\zeta^{\trianglerighteq}(I(n,r))=\{\lambda=(\lambda_1,\lambda_2,\ldots)\in \Par(n) \vert \lambda_1\geq n-r\}.$$
Let $k$ be a field of characteristic $p>0$. Then $\End_{\Sym(n)}(E^{\otimes r}_k)$ is quasi-hereditary if and only if $\zeta^{\trianglerighteq}(I(n,r))\subseteq \zeta^{\trianglerighteq_p}(I(n,r)$, i.e., if and only for every  $\mu=(\mu_1,\mu_2,\ldots) \in \Par(n)$ with $\mu_1\geq n-r$ there exists some  $\lambda=(a,1^b)$, $1\leq b\leq r$, such that $\lambda \trianglelefteq_p \mu$.

\q We are able to give an explicit list of  quasi-hereditary algebras arising in the above manner.

\begin{proposition} Let $k$ be a field of characteristic $p>0$. Let $n$ be a positive integer and $E$ an $n$-dimensional $k$-vector space with basis $e_1,\ldots,e_n.$  We regard $E$ as a $k\Sym(n)$-module with $\Sym(n)$ permuting the basis in the obvious way. For $r\geq 1$we regard the  $r$th tensor power $E^{\otimes r}$ as a $k\Sym(n)$-module via the usual tensor product action. Then $\End_{\Sym(n)}(E^{\otimes r})$ is quasi-hereditary if and only if:

(i) $p$ does not divide $n$; and 

(ii) either $n<2p$ (and  $r$ is arbitrary)    or  $n>2p$ and $r<p$. 

\end{proposition}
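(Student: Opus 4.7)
The plan is to apply Theorem~6.4 to $\Gamma = I(n, r)$.  Combined with the descriptions $\zeta(I(n,r)) = \{(a,1^b) : a + b = n,\, 1 \leq b \leq r\}$ and $\zeta^{\trianglerighteq}(I(n,r)) = \{\mu \in \Par(n) : \mu_1 \geq n-r\}$ given just before the proposition, quasi-heredity is equivalent to the combinatorial statement: every partition $\mu$ of $n$ with $\mu_1 \geq n-r$ satisfies $\tau \trianglelefteq_p \mu$ for some hook $\tau = (a,1^b)$ with $1 \leq b \leq r$.  I will prove the two directions by producing explicit obstructions one way and explicit weak $p$-expansions the other.

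For necessity I exhibit two obstructions, one for each clause.  If $p \mid n$, take $\mu = (n)$: its base-$p$ expansion has $\mu(0) = \emptyset$, so $\overline{\gamma(0)} \trianglelefteq \emptyset$ in any weak $p$-expansion $\tau = \sum_i p^i \gamma(i)$ forces $\gamma(0) = 0$, making every entry of $\tau$ divisible by $p$, which is incompatible with a hook $(a,1^b)$ having an entry equal to $1$.  If $p \nmid n$, $n \geq 2p$, and $r \geq p$, take $\mu = (n-p, p) \in \zeta^{\trianglerighteq}$; its base-$p$ expansion has $\mu(0) = (n_0)$ with $1 \leq n_0 = n \bmod p \leq p-1$.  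For a candidate hook $\tau = (a,1^b)$ with $b \geq 1$, the positions $j$ with $\tau_j = 1$ force $\gamma(0)_j = 1$ and $\gamma(i)_j = 0$ for $i \geq 1$, while positions with $\tau_j = 0$ force all $\gamma(i)_j = 0$; hence each $\gamma(i)$ with $i \geq 1$ is concentrated at position $1$.  Writing $\gamma(0)_1 = x$, the bound $\overline{\gamma(0)} \trianglelefteq (n_0)$ gives $x + b \leq n_0$, and then $a = \tau_1 \leq x + \sum_{i \geq 1} p^i \mu(i)_1 \leq (n_0 - b) + (n - p - n_0) = n - p - b$, contradicting $a = n - b$.

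For sufficiency, assume $p \nmid n$ and either $n < 2p$ or $r < p$.  Given $\mu$ with $\mu_1 \geq n-r$, set $b = n - \mu_1$ and take $\tau = (\mu_1, 1^b)$, except when $\mu = (n)$ in which case take $\tau = (n-1, 1)$.  The crux is the observation that $\mu(i)_j = 0$ for every $i \geq 1$ and $j \geq 2$: under $r < p$ one has $\mu_j \leq b < p$ for $j \geq 2$, and any $\mu(i)_j \geq 1$ with $i \geq 1$ would force $\mu_j \geq p^i \geq p$; under $n < 2p$ the identity $\sum_i p^i |\mu(i)| = n$ gives $\sum_{i \geq 1}|\mu(i)| < 2$, so at most one $\mu(i)$ with $i \geq 1$ is nonempty and equal to $(1)$.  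Consequently each $\mu(i)$ for $i \geq 1$ is a single-part partition $(m_i)$ and $\mu(0)_j = \mu_j$ for $j \geq 2$; in particular $m_0 := \mu(0)_1 \geq 1$ since $p \nmid n$.  I then take $\gamma(0)_1 = m_0$, $\gamma(0)_j = 1$ for $j = 2, \ldots, b+1$ and $0$ elsewhere, and $\gamma(i) = (m_i, 0, 0, \ldots)$ for $i \geq 1$, and verify that $\sum_i p^i \gamma(i) = \tau$, $\overline{\gamma(i)} = (m_i) \trianglelefteq \mu(i)$ for $i \geq 1$, and $\overline{\gamma(0)} = (m_0, 1^b) \trianglelefteq \mu(0) = (m_0, \mu_2, \mu_3, \ldots)$, the last reducing to $\sum_{j=2}^{k+1} \mu_j \geq k$ for $1 \leq k \leq b$, which is immediate from $\sum_{j \geq 2} \mu_j = b$ together with the partition monotonicity of $\mu$.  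The edge case $\mu = (n)$ uses the modified choice $\gamma(0)_1 = m_0 - 1$, $\gamma(0)_2 = 1$ with $\gamma(i) = (m_i, 0, \ldots)$ as before, verified in the same way.

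The main obstacle is the key observation in sufficiency: extracting the uniform conclusion $\mu(i)_j = 0$ for $i \geq 1$, $j \geq 2$ from the disjunctive hypothesis $n < 2p$ or $r < p$, and cleanly verifying the dominance condition $(m_0, 1^b) \trianglelefteq \mu(0)$; both reduce to straightforward partition inequalities once the observation is in place.
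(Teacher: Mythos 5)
Your proof is correct and follows essentially the same route as the paper: the same reduction via Theorem 6.4 and the hook description of $\zeta(I(n,r))$, the same obstructions $\mu=(n)$ (for $p\mid n$) and $\mu=(n-p,p)$ (for $n>2p$, $r\geq p$), and essentially the same weak $p$-expansions $\gamma(0)=(\mu(0)_1,1^b)$, $\gamma(i)=\mu(i)$ for $i\geq 1$ as in the paper's Step 7. The differences are only streamlinings: you take $a=\mu_1$ directly and handle $n<2p$ and $r<p$ uniformly (absorbing the paper's Steps 2--5 and the minimal-$b$ claim), and in the $n>2p$, $r\geq p$ case you derive an arithmetic contradiction from $\mu(0)=(n_0)$ where the paper instead observes that some $\mu(j)$ with $j\geq 1$ has two parts, which would force the hook to have two parts of size at least $p$.
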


\begin{proof}

\q We see this in a number of steps.  We regard $E^{\otimes r}$ as the permutation module $k I(n,r)$, as above, with $\Sym(n)$ action by $w\cdot i= w\circ i$, for $w\in\Sym(n)$, $i\in I(n,r)$. We shall say that $I(n,r)$ is quasi-hereditary if $\End_{\Sym(n)}(E^{\otimes r})$ is. 

\bs

{\it Step 1.}     If $p$ divides $n$ then $I(n,r)$ is not quasi-hereditary.

\q We have $(n-1,1)\in \zeta(I(n,r))$ and $(n,0) \trianglerighteq (n-1,1)$ so that $(n,0)\in \zeta^{ \trianglerighteq}(I(n,r))$.  Now $n=pm$, for some positive integer $m$,   so that $\mu=(n,0)=p(m,0)$ has  base $p$ expansion $(n,0)=\sum_{i\geq 0} p^i \mu(i)$, with  restricted part $\mu(0)=0$. Thus if $\tau=(a,1^b)$ has weak $p$-expansion $\tau=\sum_{i\geq 0} p^i \gamma(i)$ and ${\overline{ \gamma(i)}}  \trianglelefteq \mu(i)$, for all $i$, then $\gamma(0)=0$ and $\tau$ is divisible by $p$. However, this is not the case so no such weak $p$-expansion exists and $\mu\in \zeta^{ \trianglerighteq}(I(n,r))\backslash  \zeta^{ \trianglerighteq_p}(I(n,r))$.  Thus $ \zeta^{ \trianglerighteq}(I(n,r))\neq \zeta^{ \trianglerighteq_p}(I(n,r))$ and $I(n,r)$ is not quasi-hereditary.

\bs 

{\it Step 2.}    If $p$ does not divide $n$ then $I(n,1)$ is quasi-hereditary.

\q We have $\zeta(I(n,1))=\{(n-1,1)\}$.  If  $\mu\in  \zeta^{ \trianglerighteq}(I(n,1))\backslash  \zeta^{ \trianglerighteq_p}(I(n,r))$ then 
$\mu=(n,0)$. 
Now $n$ has base $p$ expansion $n=\sum_{i\geq 0} p^i n_i$, with $0\leq n_i<p$ for all $i\geq 0$  and  $n_0\neq 0$ and $\mu$ has base $p$ expansion $\mu=\sum_{i\geq 0} p^i \mu(i)$, with $\mu(i)=(n_i,0)$, for all $i\geq 0$. 

\q  But now we write 
$$\tau=(n-1,1)=(n_0-1,1)+\sum_{i\geq 1} p^i (n_i,0)$$
and $\tau$ has weak $p$-expansion $\tau=\sum_{i\geq 0} p^i \gamma(i)$, with $\gamma(0)=(n_0-1,1)$, $\gamma(i)=(n_i,0)$ for $i\geq 1$. Moreover   ${\overline {\gamma(i)}}\leq \mu(i)$, for all $i$ so that $(n,0)\in  \zeta^{ \trianglerighteq_p}(I(n,1))$.  Thus $ \zeta^{ \trianglerighteq}(I(n,1))= \zeta^{ \trianglerighteq_p}(I(n,1))$ and $I(n,1)$ is quasi-hereditary.

\bs

{\it Step 3.}    If $\mu\in  \zeta^{ \trianglerighteq}(I(n,r))$ is $p$-restricted then $\mu\in  \zeta^{ \trianglerighteq_p}(I(n,r))$ 

\q We have $\mu \trianglerighteq (a,1^b)$  for some $n=a+b$, $1\leq b\leq r$.  The partition  $\mu$ has base $p$ expansion $\mu=\sum_{i\geq 0} p^i \mu(i)$, with $\mu(i)=0$ for all $i\geq 1$. 

\q But now $\tau=(a,1^b)$ has week $p$-expansion $\tau=\sum_{i\geq 0} p^i \gamma(i)$, with $\gamma(0)=(a,1^b)$ and 
$\gamma(i)=0$ for all $i\geq 1$.  Furthermore we have  ${\overline {\gamma(i)}} \trianglelefteq  \mu(i)$ for all $i\geq 0$ so $\mu\in   \zeta^{ \trianglerighteq_p}(I(n,r))$.

\bs

{\it Step 4.}   If $n<p$ then $I(n,r)$ is quasi-hereditary.

\q This follows from Step 3 all since elements of $\Par(n)$ are restricted.

\bs

{\it Step 5.}    If $p<n<2p$ then $I(n,r)$ is quasi-hereditary. 

\q For a contradiction suppose not and let \\
$\mu=(\mu_1,\mu_2,\ldots) \in  \zeta^{ \trianglerighteq}(I(n,r))\backslash  \zeta^{ \trianglerighteq_p}(I(n,r))$.   We have $\mu \trianglerighteq (a,1^b)$  for some $a,b$ with $n=a+b$, $1\leq b\leq r$.  Choose $a,b$ with this property with $b\geq 1$ minimal.  If $b=1$ then $\mu\in  \zeta^{ \trianglerighteq}(I(n,1))$, which by Step 2 is   
$ \zeta^{ \trianglerighteq_p}(I(n,1))$.  Thus we have  $b\geq 2$. 

\q We claim that $\mu_1=a$. Since $\mu \trianglerighteq (a,1^b)$ the length $l$, say, of $\mu$ is at most the length of $(a,1^b)$, i.e. $b+1$.  Put $\xi=(\xi_1,\xi_2,\ldots)=(a+1,1^{b-1})$.   If $\mu_1>a$ then $\mu_1\geq \xi_1$ and, for $1<i\leq l$, we have 
$$\mu_1+\cdots+\mu_i\geq a+1+(i-1)=a+i=\xi_1+\cdots+\xi_i.$$
  So $\mu  \trianglerighteq \xi=(a+1,1^{b-1})$, which is a contradiction, and the claim is established.

\q Note that $\mu$ is non-restricted, by Step 3, and,  since $\mu$ is a partition of $n<2p$ in the base $p$ expansion $\mu=\sum_{i\geq 0} p^i\mu(i)$ of $\mu$,  we must have $\mu(1)=(1,0)$ and $\mu(i)=0$ for $i\geq 2$.  Let $\tau=(a,1^b)$. Then $\tau  \trianglelefteq \mu$ implies that $\tau -(p,0)  \trianglelefteq \mu -(p,0)=\mu(0)$. But now 
$$\tau=(a,1^b)=(a-p,1^b) + p(1,0)$$
so we have the weak $p$ expansion $\tau=\sum_{i\geq 0} p^i \gamma(i)$ with $\gamma(0)=(a-p,1^b)$, $\gamma(1)=(1,0)$ and $\gamma(i)=0$ for $i > 1$.  Since ${\overline {\gamma(i)}} \trianglelefteq \mu(i)$ for all $i\geq 0$ we have $(a,1^b) \trianglelefteq_p \mu$ and so $\mu\in  \zeta^{ \trianglerighteq_p}(I(n,r))$, a contradiction. 

\bs

{\it Step 6.}    If $n>2p$ and $r\geq p$ then $I(n,r)$ is not quasi-hereditary.

\q Note that $\zeta(I(n,r))$ contains $(n-p,1^p)$ and hence $\zeta^{ \trianglerighteq }(I(n,r))$ contains $\mu=(n-p,p)$.  Now we have $\mu=(n-2p,0)+p(1,1)$ and so $\mu=\mu(0)+p\xi$, where $\mu(0)$ has at most one part and  $\xi$ has two parts.  Hence in the base $p$ expansion $\mu=\sum_{i\geq 0} p^i \mu(i)$,  there is for some $j\geq 1$,  such that   $\mu(j)$ has  two parts. 

\q Now if $\mu\in  \zeta^{ \trianglerighteq_p }(I(n,r))$ there there exists some $\tau=(a,1^b)$ with weak $p$ expansion $\tau=\sum_{i\geq 0} p^i \gamma(i)$ such that  ${\overline {\gamma(i)}}\trianglelefteq \mu(i)$ for all $i\geq 0$.  But then $\gamma(j)$ must have at least two parts. Since $j\geq 1$,  the partition $\tau=(a,1^b)$ has two parts of size at least $p$. This is not the case so there is no such weak $p$ expansion and  $\mu\not \in  \zeta^{ \trianglerighteq_p }(I(n,r))$.   Thus $ \zeta^{ \trianglerighteq}(I(n,r))\neq \zeta^{ \trianglerighteq_p}(I(n,r))$ and $I(n,r)$ is not quasi-hereditary.

\bs

{\it Step 7.}    If $n>2p$,  if  $p$ does not divide $n$ and if $r<p$,  then $I(n,r)$ is quasi-hereditary.

\q If not there exists $\mu=(\mu_1,\mu_2,\ldots) \in  \zeta^{ \trianglerighteq}(I(n,r))\backslash  \zeta^{ \trianglerighteq_p}(I(n,r))$.  Thus $\mu \trianglerighteq (a,1^b)$, for some $n=a+b$, $b\geq 1$ and, as in Step 5, we choose such $(a,1^b)$ with $b$ minimal. Again, by Step 2, we have $b\geq 2$. 

\q We claim that $\mu_1=a$. If not, we get $\mu\trianglerighteq  (a+1,1^{b-1})$ as in Step 5, contradicting the minimality of $b$. 

\q Thus we have $\mu_2+\cdots+\mu_n=n-\mu_1=b<p$, in particular we have $\mu_i<p$ for all $i\geq 1$.  Hence in  the base $p$ expansion $\mu=\sum_{i\geq 0} p^i \mu(i)$,  for all $i\geq 1$   we have $\mu(i)=(c_i,0,\ldots,0)$, for some $0\leq c_i<p$.  Also, $\mu(0)=(k,\mu_2,\ldots,\mu_n)$, for some $k >  0$. 

\q Now we have
$$\tau=(a,1^b)=(k+\sum_{i\geq 1}  p^ic_i, 1^b)=(k,1^b)+\sum_{i\geq 1} p^i (c_i,0,\ldots,0).$$
Thus  we have the weak $p$-expansion $\tau=\sum_{i\geq 0} p^i \gamma(i)$, with $\gamma(0)=(k,1^b)$ and $\gamma(i)=(c_i,0,\ldots,0)$, for $i\geq 1$. Furthermore, ${\overline { \gamma(i)}}\trianglelefteq \mu(i)$, for all $i\geq 0$ so that $\mu\in  \zeta^{ \trianglerighteq_p}(I(n,r))$ and therefore  $\zeta^{ \trianglerighteq}(I(n,r))=  \zeta^{ \trianglerighteq_p}(I(n,r))$ and $I(n,r)$ is quasi-hereditary.
\end{proof}

\q Let $k$ be a field. Recall that, for $\de\in k$, and $r$ a positive integer we have the partition algebra $P_r(\de)$ over $k$.    One may find a detailed account of the construction and properties of $P_r(\de)$ in for example the papers by  Paul P. Martin, \cite{PPM1}, \cite{PPM2}, and \cite{HR}, \cite{BDM}.  Suppose now that $k$ has characteristic $p>0$ and $\de=n1_k$, for some positive integer $n$.  Let $E_n$ be an $n$-dimensional vector space with basis $e_1,\ldots,e_n$. Then $P_r(n)=P_r(n 1_k)$ acts on $E_n^{\otimes r}$.     By a result of Halverson-Ram, \cite[Theorem 3.6]{HR} the image of the representation $P_r(n)\to \End_k(E_n^{\otimes r})$  is $\End_{\Sym(n)}(E^{\otimes r})$.  Moreover, for $n\gg 0$ the action of $P_r(n)$ is faithful.  Let  $N=n+ps$, for $s$ suitably large, so that $P_r(n)=P_r(N)$ acts faithfully on $E_N^{\otimes r}$.  Thus $P_r(n)$ is quasi-hereditary if and only if $\End_{\Sym(N)}(E_N^{\otimes r})$ is faithful. Hence from Proposition 7.3 we have the following, which is a special case of a result of  K\"onig and  Xi, \cite[Theorem 1.4]{KX2}.

\begin{corollary} The partition algebra $P_r(n)$ is quasi-hereditary if and only if $n$ is prime to $p$ and  $r<p$. 
\end{corollary}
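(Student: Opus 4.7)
The plan is to combine the reduction described in the paragraph preceding the statement with Proposition 7.3. Since $P_r(\delta)$ depends only on the scalar $\delta\in k$, and since $n\cdot 1_k=N\cdot 1_k$ whenever $N\equiv n\pmod p$, one has $P_r(n)=P_r(N)$ for any such $N$. First I would pick $N=n+ps$ with $s$ chosen simultaneously large enough that (a) the Halverson--Ram map $P_r(N)\to \End_k(E_N^{\otimes r})$ is faithful, so that $P_r(n)\cong \End_{\Sym(N)}(E_N^{\otimes r})$, and (b) $N>2p$. Both are threshold conditions on $s$, so the maximum of the two thresholds works.

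Next I would apply Proposition 7.3 to $\End_{\Sym(N)}(E_N^{\otimes r})$. Because $N>2p$, only the second alternative of condition (ii) of that proposition is available, so the endomorphism algebra is quasi-hereditary exactly when $p\nmid N$ and $r<p$. Since $N\equiv n\pmod p$ the divisibility condition $p\nmid N$ is equivalent to $p\nmid n$, so $P_r(n)$ is quasi-hereditary iff $p\nmid n$ and $r<p$, which is the claim.

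There is no genuine obstacle here: the corollary is essentially bookkeeping on top of the reduction from $P_r(n)$ to an endomorphism algebra (Halverson--Ram, plus the fact that the partition algebra depends only on $\delta$) together with Proposition 7.3. The only minor point is arranging a single $s$ that simultaneously achieves faithfulness and the inequality $N>2p$, but since both are asymptotic in $s$ they can be satisfied together.
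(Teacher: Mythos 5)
Your argument is correct and is essentially the paper's own proof: both replace $n$ by $N=n+ps$ with $s$ large (so that the Halverson--Ram map is faithful and, as you make explicit, $N>2p$), identify $P_r(n)=P_r(N)$ with $\End_{\Sym(N)}(E_N^{\otimes r})$, and then read off the criterion from Proposition 7.3, using $N\equiv n \pmod p$ to replace $p\nmid N$ by $p\nmid n$. Your write-up in fact spells out two points the paper leaves implicit (the choice of $s$ meeting both thresholds, and the fact that only the $N>2p$ alternative of Proposition 7.3(ii) can apply), but the route is the same.
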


\bs\bs\bs 



\end{document}